\documentclass[pdflatex,sn-mathphys-num]{sn-jnl}

\usepackage{graphicx}%
\usepackage{multirow}%
\usepackage{amsmath,amssymb,amsfonts}%
\usepackage{amsthm}%
\usepackage{mathrsfs}%
\usepackage[title]{appendix}%
\usepackage{xcolor}%
\usepackage{textcomp}%
\usepackage{manyfoot}%
\usepackage{booktabs}%
\usepackage{algorithm}%
\usepackage{algorithmicx}%
\usepackage{algpseudocode}%
\usepackage{listings}%
\usepackage{tikz}
\usetikzlibrary{cd}
\usetikzlibrary{decorations.pathmorphing, positioning}
\usepackage{subcaption}
\usepackage{arydshln}
\usepackage{adjustbox}

\theoremstyle{thmstyleone}%
\newtheorem{theorem}{Theorem}%
\newtheorem{proposition}[theorem]{Proposition}%
\newtheorem{corollary}[theorem]{Corollary}

\theoremstyle{thmstyletwo}%
\newtheorem{example}{Example}%
\newtheorem{remark}{Remark}%

\theoremstyle{thmstylethree}%
\newtheorem{definition}{Definition}%
\newtheorem{assumption}{Assumption}
\newtheorem{problem}{Problem}

\begin{document}
	
	\title[Feedback Synthesis]{A Dynamical Systems view of Feedback Synthesis}
	
	\author*[1]{\fnm{William} \sur{Clark}}\email{clarkw3@ohio.edu}
	
	\affil*[1]{\orgdiv{Department of Mathematics}, \orgname{Ohio University}, \orgaddress{\street{24 Race Street}, \city{Athens}, \postcode{45701}, \state{Ohio}, \country{USA}}}
	
	
	\abstract{Control theory and dynamical systems are closely intertwined fields. Pontryagin's maximum principal offers a strong connection by providing a constructive way to synthesize feedback control laws via lifting the control system to a (Hamiltonian) dynamical system. Feedback laws can then be encoded as invariant manifolds of this induced dynamical system - under the condition that these manifolds project diffeomorphically back to the base control system.
		
		While feedback stabilization is impossible for many control systems, the above procedure can still be carried out. In this setting, the invariant manifold no longer projects diffeomorphically which results in the emergence of caustics.
		
		This paper offers an overview of the above connection by translating target sets from controls to isotropic submanifolds in symplectic geometry and associates feedback controllability to singularities of the induced invariant manifolds. Multiple low-dimensional examples are included to elucidate the theory.}
	
	\keywords{Feedback control, Lagrangian submanifolds, Invariant manifold theory for dynamical systems}
	
	\pacs[MSC Classification]{93B52, 53D12, 34C45}
	
	\maketitle
	\section{Introduction}
	The field of dynamical systems can be roughly described as ``things changing in time'' where its fundamental goal is to understand the long-term behavior of trajectories, e.g., \cite[Chapter~0]{katok_hasselblatt}. On the other hand, control theory begins with a desired long-term behavior and attempts to synthesize a dynamical system that realizes it, e.g., \cite{sontag}. In this sense, control theory can be thought of as the inverse problem of dynamical systems.
	\begin{center}
		\begin{tikzcd}
			\text{Control Theory}\arrow[bend left=25]{rr} & & \text{Dynamical Systems}\arrow[bend left=25]{ll}
		\end{tikzcd}
	\end{center}
	This association allows for results/insights from one discipline to have applications in the other. There is a long history of dynamical results being used in control. A (very non-exhaustive) list includes:
	\begin{enumerate}
		\item Lyapunov's (direct) method and LaSalle's invariant principle \cite{lasalle}.
		\item Hartman-Grobman and the stable/unstable manifold theorems \cite[Chapter~6]{katok_hasselblatt}.
		\item Structural stability \cite[Chatper~3]{geometric_arnold}.
		\item Koopman theory \cite{koopman}.
	\end{enumerate}
	Dynamical systems, on the other hand, has relied substantially less on control theory for its development. The purpose of this work is to provide a case where dynamical systems theory can be furthered by control theory. This will be accomplished by translating the problem of finding a stabilizing feedback control into a purely dynamical systems problem. This approach offers three insights:
	\begin{enumerate}
		\item Results in control theory can be translated to dynamical systems, e.g., Brockett's condition in control theory gets translated to catastrophies in dynamical systems.
		\item Target sets are associated to isotropic submanifolds of the cotangent bundle over the control system. 
		\item The induced dynamical systems provide interesting and non-trivial examples, e.g., the nonholonomic integrator generates a system with an infinite cascade of cusp catastrophies.
	\end{enumerate}
	The bridge between controls and dynamics studied here will be Pontryagin's maximum principle (PMP) \cite{Pontryagin1962}. This lifts an optimal control problem into a Hamiltonian system on the cotangent bundle of the state-space which allows for a symplectic and dynamical interpretation of feedback laws. These correspond to:
	\begin{description}
		\item[Symplectic geometry:] Hamiltonian-invariant Lagrangian submanifolds, and
		\item[Dynamical systems:] stable manifolds.
	\end{description}
	See Fig. \ref{fig:algorithm_overview} for an overview the procedure in the special case of stabilizing a fixed point $\tilde{x}$. Synthesizing a feedback via this procedure will be referred to as \textit{characteristic feedback synthesis}.
	
	\begin{figure}
		\centering
		\begin{adjustbox}{frame}
			\begin{tikzcd}
				\text{Feedback Control} \arrow[r, leftrightsquigarrow] & \text{Symplectic Geometry} \arrow[r, leftrightsquigarrow] & \text{Dynamical Systems} \\[2ex]
				\text{Target set} \arrow[u, phantom, "\subset"{rotate=90}]\arrow[r]\arrow[d, dashed] & \text{Isotropic Submanifold} \arrow[u, phantom, "\subset"{rotate=90}]
				\arrow[r] 
				& \text{Stable Manifold} \arrow[u, phantom, "\subset"{rotate=90}]\arrow[dll, bend left=10]\\
				\text{Feedback law}
			\end{tikzcd}
		\end{adjustbox}
		\medskip
		
		\fbox{
			\begin{tikzpicture}[
				node distance=1.5cm,
				box/.style={
					draw,
					rectangle,
					minimum width=3cm,
					minimum height=1cm,
					align=center
				}
				]
				\node[box] (a) {$\dot{x} = f(x, u), \quad J = \displaystyle\int_0^\infty \, \ell(x, u) \, dt$};
				\node[box, below=of a] (b) {$\begin{array}{rl}
						\dot{x} &= f(x, \mu(x,p)) \\[1ex]
						\dot{p} &= \dfrac{\partial \ell}{\partial x}(x, \mu(x,p)) - p\dfrac{\partial f}{\partial x}(x, \mu(x,p)) \\[2ex]
						\mu(x,p) &= \displaystyle\arg\max_{\nu \in \mathcal{U}} \ \left[ \langle p, f(x,u)\rangle - \ell(x,u) \right]
					\end{array}$};
				\node[box, below=of b] (c) {$\Lambda = \left\{ (x,p) : \displaystyle\lim_{t\to\infty} \, \varphi_t(x,p) \to (\tilde{x}, \star) \right\}$};
				\draw[->] (a) -- (b);
				\draw[->] (b) -- (c);
				\draw[->, dashed] (c.east) to [out=0, in=0, looseness=1] (a.east);
				\node[left=4.35cm of a.center] {\shortstack{Cost \\ function}};
				\node[left=4cm of b.center] {\shortstack{Hamiltonian \\ system}};
				\node[left=4.3cm of c.center] {\shortstack{Stable \\ manifold}};
		\end{tikzpicture}}
		\caption{Overview of the characteristic feedback synthesis.}
		\label{fig:algorithm_overview}
	\end{figure}
	Suppose that the resulting stable manifold $\Lambda$ is the graph of a function, i.e. there exists a (smooth) function $\rho$ such that
	\begin{equation*}
		\Lambda = \left\{ (x, \rho(x)) : x\in Q \right\}.
	\end{equation*}
	A stable dynamical system can be created by the feedback $u = \mu(x, \rho(x))$:
	\begin{equation*}
		\dot{x} = f\left(x, \mu(x, \rho(x))\right).
	\end{equation*}
	This results in the following insight (see Fig. \ref{fig:example_manifolds}):
	\begin{equation}\label{eq:insight}
		\begin{tikzcd}
			\text{stabilizing feedback} \arrow[r, leftrightarrow] & \text{stable manifold is a graph}.
		\end{tikzcd}
	\end{equation}
	\begin{figure}
		\centering
		\begin{subfigure}{0.475\textwidth}
			\centering
			\begin{tikzpicture}
				\draw[thick, ->] (-3,0) -- (3,0);
				\draw[thick, ->] (0,-3) -- (0,3);
				\draw[very thick, blue, ->] (3,2) to [out=200, in=20] (0.05,0.025);
				\draw[very thick, blue, ->] (-3,-2) to [out=10, in=200] (-0.05,-0.025);
				\draw[very thick, red, ->, domain=0:3, smooth] plot ({-0.75*sin(deg(1.5*\x))}, \x); 
				\draw[very thick, red, ->, domain=0:-3, smooth] plot ({-0.75*sin(deg(1.5*\x))}, \x); 
				\node[blue] at (2.5,1) {$\Lambda_1 = W^s_1$};
				\node[red] at (0.5,2) {$W^u_1$};
				\node[below] at (3,-0.05) {$x$};
				\node[left] at (0,3) {$p$};
				\node[below left] at (0,-0.1) {$\tilde{x}$};
			\end{tikzpicture}
			\caption{A stabilizing feedback law exists.}
		\end{subfigure}
		\begin{subfigure}{0.475\textwidth}
			\centering
			\begin{tikzpicture}
				\draw[thick, ->] (-3,0) -- (3,0);
				\draw[thick, ->] (0,-3) -- (0,3);
				\draw[very thick, blue, ->, domain=2.55:0.05, smooth] plot ({1.5*\x*\x*(1-\x/2)}, \x);
				\draw[very thick, blue, ->, domain=-1.3:-0.05, smooth] plot ({\x*\x*(1-\x/2)}, \x);
				\draw[very thick, red, ->, domain=0:2.75, smooth] plot (\x, {\x*\x/3});
				\draw[very thick, red, ->, domain=0:-2.75, smooth] plot (\x, {-\x*\x/3});
				\node[blue] at (2,-1.6) {$\Lambda_2 = W^s_2$};
				\node[red] at (2.55,1.4) {$W^u_2$};
				\node[below] at (3,-0.05) {$x$};
				\node[left] at (0,3) {$p$};
				\node[below left] at (0,-0.1) {$\tilde{x}$};
			\end{tikzpicture}
			\caption{The induced feedback law does not exist.}
			\label{subfig:caustic_feedback}
		\end{subfigure}
		\caption{Left: A non-singular manifold as $\tilde{x}\not\in \Sigma(\Lambda_1)$. This set can be expressed as the graph of a function $p=\rho(x)$ which induces the feedback law $k(x)=\mu(x, \rho(x))$. Right: A singular manifold as $\tilde{x}\in\Sigma(\Lambda_2)$. This set cannot be expressed as a graph of a function and the induced feedback law will not be continuous.}
		\label{fig:example_manifolds}
	\end{figure}
	While the idea of characteristic feedback synthesis and its connection to symplectic geometry is not new \cite{vdschaft_h_infty}, the majority of the focus is on the computational aspect: computing stable manifolds \cite{sakamoto, symplectic_algorithm, av2004, av2003} and computing feedback laws \cite{inverted_pendulum, SAKAMOTO2013568}. 
	A qualitative overview outlining the connection \eqref{eq:insight} (and its converse), as well as extending beyond fixed points, is lacking. The contribution of this work is to explain the geometry of the lifted dynamical system. Three points in particular are:
	\begin{enumerate}
		\item When will $\Lambda$ fail to be a graph? For example, this must be the case if a control system does not satisfy Brockett's necessary condition \cite{brockett1983asymptotic}.
		\item How does this generalize to stabilizing sets more complicated than a fixed-point, e.g., a periodic orbit? If $(\tilde{x},\star)$ is a hyperbolic fixed point for some $\star\in T^*_{\tilde{x}}Q$, then it is known that its stable manifold is Lagrangian. How does this extend to stable manifolds of sets that are not hyperbolic fixed-points? Under characteristic feedback synthesis, more complicated target sets become isotropic manifolds.
		\item This procedure offers an avenue to produce interesting examples in dynamical systems theory whose analysis is aided by results in control theory.
	\end{enumerate}
	
	Generalizing hyperbolic fixed points to more general submanifolds results in Normally Attractive Invariant Manifolds (NAIMs) and Normally Hyperbolic Invariant Manifolds (NHIMs), see, e.g., \cite{eldering2018}. This is too strong an assumption to place on the systems in this work as invariant manifolds in Hamiltonian systems cannot be normally hyperbolic (see Theorem \ref{thm:isotropic_nhim}). The regularity assumption used here will be the following.
	\begin{assumption}\label{ass:naim}
		Let $\varphi_t:M\to M$ be a flow and $N\subset M$ an invariant manifold, i.e. $\varphi_t(N)=N$ for all $t$. Denote its stable set by
		\begin{equation*}
			W^s(N) = \left\{ x\in M : \lim_{t\to\infty} \, \varphi_t(x)\in N \right\}.
		\end{equation*}
		It will be assumed that $W^s(N)$ is:
		\begin{itemize}
			\item[(A1)] (locally) a smooth manifold, and
			\item[(A2)] the tangent dynamics are convergent, i.e. for all tangent vectors $v\in T(W^s(N))$,
			\begin{equation*}
				\lim_{t\to\infty} \, \left( \varphi_t\right)_*v \in TN,
			\end{equation*}
			where $\left( \varphi_t\right)_*$ is the tangent flow induced from $\varphi_t$.
		\end{itemize}
	\end{assumption}
	
	The layout of the paper is the following: Section \ref{sec:pmp} reviews optimal control and includes a statement of the PMP in the context of stabilizing feedback control. Section \ref{sec:symplectic} reviews basic aspects of symplectic geometry, including Hamiltonian systems and Lagrangian/isotropic submanfolds. Section \ref{sec:hyperbolic_fixed} analyzes the connection between feedback control and isotropic submanifolds in Hamiltonian systems.
	Various examples are shown in Section \ref{sec:examples}. Finally, conclusions are in Section \ref{sec:conclusion}.
	
	\section{Optimal Control and Pontryagin's Maximum Principle}\label{sec:pmp}
	\begin{definition}[Control System]
		A \textit{control system} has the form
		\begin{equation*}
			\dot{x} = f(x,u), \quad f:Q\times \mathcal{U}\to TQ,
		\end{equation*}
		where $Q$ is an $n$-dimensional smooth manifold, $\mathcal{U}\subset \mathbb{R}^m$ is a closed set, $x\in Q$ is the state, and $u\in \mathcal{U}$ is the control. It will be assumed that $f$ is smooth.
	\end{definition}
	A fundamental goal of control systems is stabilizing about a fixed point via \textit{continuous} feedback.
	\begin{definition}[{Feedback stabilizable, \cite[\S 10.11]{coron_nonlinear}}]\label{def:feedback_stable}
		Let $\tilde{x}\in Q$ such that $f(\tilde{x},0) = 0$. The control system is \textit{asymptotically feedback stabilizable} by means of a continuous stationary feedback law if there exists a continuous $u\in C^0(Q;\mathcal{U})$ satisfying $u(\tilde{x})=0$ such that
		$\tilde{x}\in Q$ is an asymptotically stable point of the dynamical system
		\begin{equation*}
			\dot{x} = f(x, u(x)).
		\end{equation*}
		Likewise, the system is spectrally stabilizable if the fixed point can be made to be spectrally stabilizable (all eigenvalues of the linearization have strictly negative real part). 
	\end{definition}
	A celebrated necessary condition for a control system to be feedback stabilizable is Brockett's condition \cite{brockett1983asymptotic}; see also \cite[\S 4.5.2]{bloch_nh}, \cite[Theorem~11.1]{coron_nonlinear}, \cite[Theorem~4.1]{liberzon_switching}, and \cite{kvalheim_obstructions}.
	\begin{theorem}[Brockett's condition]
		If the control system $\dot{x}=f(x,u)$ can be (locally) asymptotically stabilized to a fixed point $\tilde{x}\in Q$, then the image of $f$ by every neighborhood of $(\tilde{x},0)\in Q\times \mathcal{U}$ is a neighborhood of $0\in TQ$.
	\end{theorem}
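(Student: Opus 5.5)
The plan is the classical degree-theoretic argument, which is local, so I work in a coordinate chart around $\tilde{x}$ and identify a neighborhood of $\tilde{x}$ with a ball in $\mathbb{R}^n$, with $\tilde{x}=0$. Suppose $u$ is a continuous feedback with $u(0)=0$ making $0$ locally asymptotically stable for the closed-loop field $F(x)=f(x,u(x))$. I will show that for every sufficiently small $\varepsilon>0$ the image $F(B_\varepsilon)$ contains a neighborhood of $0$. Since $u$ is continuous with $u(0)=0$, any neighborhood $W$ of $(\tilde{x},0)$ contains the graph of $u$ over some $B_\varepsilon$. Hence $f(W)\supset F(B_\varepsilon)$, which gives the statement.

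\textbf{Step 1 (converse Lyapunov).} Asymptotic stability of the merely continuous vector field $F$ gives a smooth proper Lyapunov function $V$ near $0$ with $\nabla V\cdot F<0$ away from $0$. I would take this from Kurzweil's converse theorem. Uniqueness of solutions is not actually required: using Peano solutions, the Kurzweil/Massera-type results still apply. Alternatively, one can first approximate $F$ by a smooth field that is $C^0$-close on a sphere; the degree argument below only needs data on a sphere.

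\textbf{Step 2 (index computation).} Using $V$, I would show that the index of $F$ at $0$ equals $(-1)^n$, i.e.\ $\deg(F, B_\varepsilon, 0)=(-1)^n$ for small $\varepsilon$. The homotopy is $H_s=sF-(1-s)\nabla V$. This field never vanishes on a level set $\{V=c\}$, because $\langle \nabla V, H_s\rangle<0$ there. So $F$ is homotopic to $-\nabla V$ through nonvanishing fields on $\partial\{V\le c\}$. The sublevel set $\{V\le c\}$ is a ball, by Wilson's result or via flowing along $-\nabla V$, and $-\nabla V$ points inward, so its degree is $(-1)^n$. This is the main obstacle. The delicate points are the regularity of $V$ and the topology of the sublevel set, both needed when $F$ is only continuous. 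I would try first to cite the Krasnosel'skii--Zabreiko theorem, which states that the index of an asymptotically stable equilibrium of a continuous field is $(-1)^n$.

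\textbf{Step 3 (surjectivity near $0$).} Let $\delta=\min_{\partial\{V\le c\}}|F|>0$. For $|y|<\delta$, homotopy invariance of degree gives $\deg(F-y,\{V\le c\},0)=\deg(F,\{V\le c\},0)\neq 0$. Therefore $F(x)=y$ has a solution $x$ in $\{V\le c\}$. Choosing $c$ small enough that $\{V\le c\}\subset B_\varepsilon$ shows $F(B_\varepsilon)\supset B_\delta(0)$. Undoing the chart, this is a neighborhood of $0$ in $T_{\tilde{x}}Q$ (more precisely, of the zero section near $\tilde{x}$), which completes the argument.
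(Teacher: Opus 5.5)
The paper does not prove this statement. It quotes it from Brockett's paper and the textbook literature (Coron's Theorem~11.1 among them), so there is no internal argument to compare against. Your proposal is the standard degree-theoretic proof in those sources, and its main line is correct. It combines a smooth Kurzweil Lyapunov function, the Krasnosel'skii--Zabreiko index $(-1)^n$ of an asymptotically stable equilibrium, and homotopy invariance of the degree under $F\mapsto F-y$. The reduction from an arbitrary neighborhood $W$ of $(\tilde{x},0)$ to $F(B_\varepsilon)$, via continuity of the feedback and $u(\tilde{x})=0$, is also fine. The argument even yields slightly more than the statement asks, namely $\deg(F,B_\varepsilon,0)=(-1)^n$.

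A few things to tighten.

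\textbf{Step 1.} Kurzweil's theorem for a merely continuous field needs asymptotic stability for \emph{all} (possibly non-unique) solutions. That is the notion in Coron's book, which Definition~\ref{def:feedback_stable} cites, so state it explicitly. Your ``alternatively'' remark does not work by itself. A smooth field that is only $C^0$-close to $F$ on a sphere need not be asymptotically stable, so nothing determines its degree. Once you have $V$, the smooth field you want is simply $-\nabla V$.

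\textbf{Step 2.} The linear homotopy is nonvanishing on all of $B_\varepsilon\setminus\{0\}$, so level sets are not needed for that part. What actually requires proof is $\deg(-\nabla V)=(-1)^n$, and you have two options.
\begin{itemize}
\item Apply the Krasnosel'skii flow homotopy $x\mapsto(\phi_t(x)-x)/t$, $t\in(0,T]$, to the smooth, asymptotically stable field $-\nabla V$. Since there are no periodic orbits, it has no zeros on a small sphere, and for large $T$ it deforms to $-x/T$.
\item Keep your route. The normalized gradient flow gives $\{0<V\le c\}\cong\{V=c\}\times(0,c]$, so $\{V\le c\}$ is contractible and Wilson's result is unnecessary. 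Hopf's theorem then yields $\deg(\nabla V)=1$: the outward normal of a smooth compact domain has degree equal to its Euler characteristic. Your sentence ``points inward, so degree $(-1)^n$'' is exactly this citation.
\end{itemize}

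\textbf{Step 3.} Drop the parenthetical about the zero section. For $m<n$, the image of an $(n+m)$-dimensional set under a smooth map into the $2n$-dimensional $TQ$ has empty interior. The loosely worded conclusion should be read in a local trivialization, where the fiber components $f(x,u)\in\mathbb{R}^n$ fill a ball about $0$. That is precisely what your Step~3 proves.
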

	
	\begin{example}\label{ex:nh_integrator}
		The canonical example of a control system that is controllable but fails Brockett's condition is the nonholonomic integrator:
		\begin{equation*}
			\dot{x} = u, \quad \dot{y} = v, \quad \dot{z} = uy - vx,
		\end{equation*}
		as the vector $(0,0,a)$ does not lie in the image for any $a\ne 0$. This example will be returned to later in \S\ref{sec:nh_integrator}.
	\end{example}
	
	Requiring that the feedback law is spectrally stable results in the following result.
	\begin{proposition}\label{prop:full_rank}
		Suppose that $u$ is a smooth function such that $\dot{x} = f(x,u(x))$ is spectrally stable to $\tilde{x}$. Then
		\begin{equation*}
			\mathrm{rank} \ \left[\begin{array}{c|c}
				\dfrac{\partial f}{\partial x}(\tilde{x}, 0) & \dfrac{\partial f}{\partial u}(\tilde{x}, 0)
			\end{array}\right] = n.
		\end{equation*}
	\end{proposition}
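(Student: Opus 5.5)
The plan is to linearize the closed-loop system at $\tilde{x}$ and read off the rank condition from the fact that the resulting Jacobian is invertible. Work in a chart around $\tilde{x}$ so that $Q$ is locally $\mathbb{R}^n$. The closed-loop vector field is $F(x) = f(x,u(x))$, and since $u(\tilde{x})=0$ and $f(\tilde{x},0)=0$, the point $\tilde{x}$ is a fixed point of $F$. Because $u$ is smooth, the chain rule gives
\begin{equation*}
DF(\tilde{x}) = \frac{\partial f}{\partial x}(\tilde{x},0) + \frac{\partial f}{\partial u}(\tilde{x},0)\,\frac{\partial u}{\partial x}(\tilde{x}),
\end{equation*}
with $\frac{\partial u}{\partial x}(\tilde{x})$ an $m\times n$ matrix. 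Write $A = \frac{\partial f}{\partial x}(\tilde{x},0)$, $B = \frac{\partial f}{\partial u}(\tilde{x},0)$ and $K = \frac{\partial u}{\partial x}(\tilde{x})$, so that $DF(\tilde{x}) = A + BK$.

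Spectral stability (Definition~\ref{def:feedback_stable}) means every eigenvalue of $A+BK$ has strictly negative real part. In particular $0$ is not an eigenvalue, so $A+BK$ is invertible and has rank $n$. The key factorization is
\begin{equation*}
A + BK = \left[\begin{array}{c|c} A & B \end{array}\right] \begin{bmatrix} I_n \\ K \end{bmatrix}.
\end{equation*}
The rank of a product is at most the rank of each factor, so
\begin{equation*}
n = \mathrm{rank}(A+BK) \le \mathrm{rank}\,[A\mid B] \le n,
\end{equation*}
where the last inequality holds because $[A\mid B]$ has $n$ rows. This forces equality, which is the claim.

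No step is a real obstacle. The only points needing care are making sure the fixed point of the closed loop is exactly $(\tilde{x},0)$, so that the Jacobians are evaluated at the point appearing in the statement, and noting that the rank conclusion does not depend on the chart. Equivalently, one may argue by contradiction: if the rank were less than $n$, there would be a nonzero covector $w$ with $wA=0$ and $wB=0$. Then $w(A+BK)=0$, so $0$ would be an eigenvalue of $A+BK$, contradicting spectral stability.
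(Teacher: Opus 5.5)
Your proof is correct and complete. The paper states Proposition~\ref{prop:full_rank} without any proof, so there is no argument of the paper's to compare against. Your factorization
\[
A+BK=\left[\begin{array}{c|c} A & B\end{array}\right]\begin{bmatrix} I_n\\ K\end{bmatrix},
\]
or equivalently your left-null-covector argument, is the natural way to fill that gap.

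Two small remarks. First, you are right to import $u(\tilde{x})=0$ from Definition~\ref{def:feedback_stable}. The proposition's wording leaves it implicit, and without it the chain rule would evaluate the Jacobians at $(\tilde{x},u(\tilde{x}))$ rather than at $(\tilde{x},0)$, so the stated conclusion would not follow.

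Second, your argument is the $\lambda=0$ case of the Hautus test. The same factorization with $A-\lambda I$ in place of $A$,
\[
A+BK-\lambda I=\left[\begin{array}{c|c} A-\lambda I & B\end{array}\right]\begin{bmatrix} I_n\\ K\end{bmatrix},
\]
gives $\mathrm{rank}\,[A-\lambda I\mid B]=n$ for every $\lambda\in\mathbb{C}$ with $\mathrm{Re}\,\lambda\ge 0$. This is exactly stabilizability of the linearized pair $(A,B)$, which is immediate here since $K$ itself makes $A+BK$ Hurwitz.
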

	
	We conclude this section by recalling control Lyapunov functions.
	\begin{definition}[Control Lyapunov function]\label{def:clf}
		A function $V:Q\to\mathbb{R}$ is a \textit{control Lyapunov function} for the control system if
		\begin{itemize}
			\item $V(x)\to +\infty$ as $|x|\to +\infty$,
			\item $V(x)>0$ for all $x\ne \tilde{x}$, and
			\item for all $x\ne \tilde{x}$ there exists $u\in\mathcal{U}$ such that $f(x,u)\cdot\nabla V(x) < 0$.
		\end{itemize}
		Moreover, $V$ has the small control property if for every $\varepsilon>0$ there exists $\eta>0$ such that if $|x|<\eta$, there exists $|u|<\varepsilon$ satisfying $f(x,u)\cdot V(x)<0$.
	\end{definition}
	\begin{theorem}[\cite{ARTSTEIN19831163}]
		If the control system is globally asymptotically stabilizable by means of a continuous stationary feedback law, then it admits a control Lyapunov function satisfying the small control property. The converse is true provided that the control system is affine.
	\end{theorem}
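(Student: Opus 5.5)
The statement splits into two directions, and I would treat them separately: a converse Lyapunov argument for necessity, and an explicit feedback construction for sufficiency in the affine case.

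\emph{Necessity.} Suppose $u\in C^0(Q;\mathcal{U})$ with $u(\tilde{x})=0$ makes $\tilde{x}$ globally asymptotically stable for $F(x):=f(x,u(x))$.
\begin{enumerate}
\item Since $F$ is only continuous, solutions need not be unique. I would therefore not use the classical Massera construction. Instead I would invoke Kurzweil's converse Lyapunov theorem for continuous vector fields. It gives a smooth $V$ that is positive definite and proper (radially unbounded), with $\nabla V(x)\cdot F(x)<0$ for $x\neq\tilde{x}$.
\item For each $x\neq\tilde{x}$, the control $u(x)$ itself witnesses the third condition of Definition~\ref{def:clf}. So $V$ is a control Lyapunov function.
\item For the small control property, fix $\varepsilon>0$. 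Continuity of $u$ at $\tilde{x}$, together with $u(\tilde{x})=0$, gives $\eta>0$ such that $|x|<\eta$ implies $|u(x)|<\varepsilon$. The same witness $u(x)$ then works.
\end{enumerate}
The only delicate point in this direction is obtaining a proper smooth Lyapunov function without uniqueness of solutions. I would defer to Kurzweil's theorem here rather than reprove it.

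\emph{Sufficiency for affine systems.} Write $f(x,u)=f_0(x)+\sum_i u_i g_i(x)$, and assume $\mathcal{U}=\mathbb{R}^m$; a convex $\mathcal{U}$ would need an extra argument. Set $a(x)=\nabla V\cdot f_0$ and $b(x)=(\nabla V\cdot g_i)_i$. The control Lyapunov condition says exactly that $b(x)=0$ forces $a(x)<0$ whenever $x\neq\tilde{x}$. I would build the feedback with Sontag's universal formula:
\begin{equation*}
k(x)=-\frac{a+\sqrt{a^2+|b|^4}}{|b|^2}\,b \quad (b\neq 0), \qquad k(x)=0 \quad (b=0).
\end{equation*}
\begin{enumerate}
\item A direct computation gives $\dot V=a+b\cdot k=-\sqrt{a^2+|b|^4}<0$ for $x\neq\tilde{x}$.
\item Together with properness of $V$, this yields global asymptotic stability.
\item Away from $\tilde{x}$, $k$ is continuous (indeed smooth). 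This follows because $\phi(a,c)=(a+\sqrt{a^2+c^2})/c$ extends analytically to the set $\{c>0\}\cup\{a<0\}$, and $(a(x),|b(x)|^2)$ always lies in that set.
\end{enumerate}

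\emph{Main obstacle.} I expect the hardest step to be continuity of $k$ at $\tilde{x}$, since there both $a$ and $b$ vanish. I would use the small control property as follows. Near $\tilde{x}$ there exist $|u|<\varepsilon$ with $a+b\cdot u<0$, so $a<\varepsilon|b|$. Splitting into the cases $a\le 0$ and $a>0$, one bounds
\begin{equation*}
|k| \le \frac{|a|+\sqrt{a^2+|b|^4}}{|b|} \le 2\varepsilon+3|b|
\end{equation*}
in both cases once $|b|$ is small. This bound forces $k(x)\to 0$ as $x\to\tilde{x}$, completing the converse.
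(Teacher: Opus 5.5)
The paper gives no proof of this theorem; it only cites Artstein. Your decomposition is the standard modern one: Kurzweil's converse Lyapunov theorem for necessity, and Sontag's universal formula for sufficiency. Artstein's original sufficiency argument is different and non-constructive. It glues locally chosen decreasing controls with a partition of unity, working with relaxed controls that affinity lets one average. That argument works for convex $\mathcal U$. The universal formula instead is explicit and gives a feedback that is smooth off $\tilde x$, but it is tied to $\mathcal U=\mathbb R^m$.

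Your caveat about $\mathcal U$ is genuinely needed, since the paper's statement is imprecise here. For a nonconvex closed $\mathcal U$ the converse fails. Take $\dot x=u$ with $\mathcal U=\{0\}\cup\{\pm 1/n : n\geq 1\}$. Then $V=x^2$ is a control Lyapunov function with the small control property. But any continuous feedback into this totally disconnected set is constant, hence identically $0$, so nothing is stabilized.

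The step you single out as the main obstacle contains a false inequality. In
$|k|\le (|a|+\sqrt{a^2+|b|^4})/|b|\le 2\varepsilon+3|b|$, the second inequality fails when $a<0$. Replacing $a$ by $|a|$ throws away exactly the cancellation the case $a\le 0$ relies on. For example, $\dot x=-x+x^3u$ with $V=x^2/2$ gives $a=-x^2$ and $b=x^4$. The middle expression is then at least $2|a|/|b|=2/x^2\to\infty$, even though $k\to 0$.

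Keep $a$ signed, so that $|k|=(a+\sqrt{a^2+|b|^4})/|b|$ when $b\neq 0$.
\begin{itemize}
\item If $a\le 0$, then $\sqrt{a^2+|b|^4}\le -a+|b|^2$, so $|k|\le |b|$.
\item If $a>0$, the small control property gives $a<\varepsilon|b|$, so $|k|\le (2a+|b|^2)/|b|<2\varepsilon+|b|$.
\end{itemize}
Hence $|k(x)|\le 2\varepsilon+|b(x)|$ whenever $0<|x-\tilde x|<\eta$. Since $\tilde x$ minimizes $V$, we have $\nabla V(\tilde x)=0$, so $b(x)\to 0$. Therefore $\limsup_{x\to\tilde x}|k(x)|\le 2\varepsilon$ for every $\varepsilon>0$, which gives continuity of $k$ at $\tilde x$.

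With this correction the rest of your argument goes through: the identity $\dot V=-\sqrt{a^2+|b|^4}$, analyticity of $\phi$ on $\{c>0\}\cup\{a<0\}$, and the necessity direction via Kurzweil together with continuity of $u$ at $\tilde x$.
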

	
	\subsection{Optimal Control}
	Optimal control provides a systematic and constructive way to generate stabilizing feedback laws. For a control system $\dot{x} = f(x,u)$, consider an associated cost function
	\begin{equation}\label{eq:control_cost}
		J(u) := \int_{t_0}^{t_f} \, \ell(x(s), u(s)) \, ds + g\left( x(t_f) \right),
	\end{equation}
	where $\ell$ is the running cost and $g$ the terminal cost. 
	The goal of optimal control is to find the control signal $u^*:[t_0,t_f]\to \mathcal{U}$ that minimizes the cost \eqref{eq:control_cost}, i.e.
	\begin{equation*}
		u^* = \arg\min_u \, J(u).
	\end{equation*}
	Necessary conditions for a control to minimize the cost is the Pontryagin maximum principle. The theorem below is stated for $Q = \mathbb{R}^n$, but its extension to arbitrary manifolds is straightforward.
	
	\begin{definition}[Control Hamiltonian]
		The \textit{control Hamiltonian} for the optimal control problem described above is defined as 
		\begin{gather*}
			H:\mathbb{R}^n \times (\mathbb{R}^n)^* \times\mathbb{R}^+ \times \mathcal{U} \to \mathbb{R}, \\
			H(x, p, \lambda_0, u) = \langle p, f(x,u) \rangle - \lambda_0 \ell(x,u). \notag
		\end{gather*}
	\end{definition}
	
	\begin{theorem}[{Pontryagin maximum principle, \cite[2.2.1]{geo_opt_ctrl}, \cite{Pontryagin1962}}]\label{thm:pmp}
		Let $(x^*,u^*)$ be a controlled trajectory defined over the interval $[t_0,t_f]$ with the control $u^*$ piecewise continuous, i.e.
		\begin{equation*}
			\dot{x}^*(t) = f(x^*(t), u^*(t)).
		\end{equation*}
		If $u^*$ is optimal, then there exist a constant $\lambda_0\geq 0$ and a covector $p:[t_0,t_f]\to (\mathbb{R}^n)^*$ such that the following conditions are satisfied:
		\begin{enumerate}
			\item Nontriviality of the multipliers: $(\lambda_0, p(t))\ne (0,0)$ for all $t\in [t_0,t_f]$.
			\item Adjoint equation: the adjoint variable, $p$, is a solution to the linear differential equation
			\begin{equation*}
				\dot{p}(t) = \lambda_0\frac{\partial \ell}{\partial x}(x^*(t), u^*(t)) - p(t)\frac{\partial f}{\partial x}(x^*(t), u^*(t)).
			\end{equation*}
			\item Maximum condition: for $t\in [t_0,t_f]$ we have
			\begin{equation*}
				H(x^*(t), p(t), \lambda_0, u^*(t)) = \max_{\nu\in\mathcal{U}} \, H(x^*(t), p(t), \lambda_0, \nu).
			\end{equation*}
			\item Transverality condition: at the endpoint of the controlled trajectory, 
			\begin{equation}\label{eq:mom_boundary}
				p(t_f) = \lambda_0 \frac{\partial g}{\partial x}(x(t_f)).
			\end{equation}
		\end{enumerate}
	\end{theorem}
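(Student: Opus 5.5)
We prove the statement by the classical needle-variation argument, after reducing the problem to Mayer form. \emph{Step 1 (augmentation).} Introduce an extra state $x^0$ with $\dot{x}^0 = \ell(x,u)$, $x^0(t_0)=0$, and set $\hat{x}=(x^0,x)\in\mathbb{R}^{n+1}$ with dynamics $\hat{f}(\hat{x},u) = (\ell(x,u), f(x,u))$. The cost \eqref{eq:control_cost} becomes the terminal function
\begin{equation*}
	\Phi(\hat{x}(t_f)) = x^0(t_f) + g(x(t_f)).
\end{equation*}
Optimality of $u^*$ then says that no admissible control drives $\hat{x}(t_f)$ into the open half-space $\{\Phi < \Phi(\hat{x}^*(t_f))\}$, to first order near $\hat{x}^*(t_f)$.

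\emph{Step 2 (needle variations).} Fix a Lebesgue (continuity) point $\tau\in(t_0,t_f)$ of $u^*$, a value $\nu\in\mathcal{U}$, and $\varepsilon>0$ small. Replace $u^*$ by $\nu$ on $[\tau-\varepsilon,\tau]$ and leave it unchanged elsewhere. Using only that $\mathcal{U}$ is closed, not convex, the state at $\tau$ moves by
\begin{equation*}
	\varepsilon\bigl[\hat{f}(\hat{x}^*(\tau),\nu)-\hat{f}(\hat{x}^*(\tau),u^*(\tau))\bigr] + o(\varepsilon).
\end{equation*}
This displacement is transported to $t_f$ by the linearized flow $\Psi(t_f,\tau)$ of $\partial\hat{f}/\partial\hat{x}$ along the reference trajectory. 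Taking nonnegative combinations of finitely many such needles, at distinct times, gives a convex cone $K\subset\mathbb{R}^{n+1}$ of first-order terminal displacements.

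\emph{Step 3 (separation, the main obstacle).} The hard step is to show that $K$ genuinely approximates the reachable set. We need: if $K$ contained a direction $d$ with $d\Phi\cdot d<0$ in its interior, then some actual admissible control would decrease the cost. We would prove this by the standard topological argument. Parametrize a simplex of needle combinations, note that the endpoint map is continuous in the needle widths with derivative given by $K$, and apply the Brouwer fixed-point theorem (or a degree argument) to conclude that the reachable set contains a neighborhood of the vertex of the cone in those directions. Granting this, optimality forces $d\Phi(\hat{x}^*(t_f))$ to be nonnegative on $K$. Hence there is a nonzero covector $\hat{p}(t_f)=(-\lambda_0,p(t_f))$, with $\lambda_0\geq 0$, that separates $K$ from the descent half-space. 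In the normal case it is proportional to $-d\Phi$, which gives \eqref{eq:mom_boundary}. The abnormal case $\lambda_0=0$ arises when $K$ is not the full space.

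\emph{Step 4 (adjoint, maximum condition, nontriviality).} Define $\hat{p}(t)$ by backward propagation, $\hat{p}(t)=\hat{p}(t_f)\Psi(t_f,t)$. By construction this solves the adjoint ODE. Its zeroth component is constant, because $\hat{f}$ does not depend on $x^0$. Its remaining components satisfy exactly item 2 of the theorem, with the sign conventions absorbed into $H=\langle p,f\rangle-\lambda_0\ell$. The separation inequality applied to a single needle at $(\tau,\nu)$ reads
\begin{equation*}
	\hat{p}(\tau)\cdot\bigl[\hat{f}(\hat{x}^*(\tau),\nu)-\hat{f}(\hat{x}^*(\tau),u^*(\tau))\bigr]\le 0,
\end{equation*}
which is $H(x^*,p,\lambda_0,\nu)\le H(x^*,p,\lambda_0,u^*)$. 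This holds at every point of continuity, and extends to all $t$ by piecewise continuity of $u^*$. Finally, $(\lambda_0,p(t))\neq 0$ for all $t$, because $\hat{p}(t_f)\ne0$ and $\Psi$ is invertible.
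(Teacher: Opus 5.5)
The paper gives no proof of this theorem; it is quoted from \cite{geo_opt_ctrl} and \cite{Pontryagin1962}. Your needle-variation plan is the standard classical route, but it fails at the transversality condition, and the fault is in the statement rather than in your computation. You take $\hat p(t_f)=(-\lambda_0,p(t_f))$ to be a positive multiple of $-d\Phi=(-1,-\partial g/\partial x)$. That gives $p(t_f)=-\lambda_0\,\frac{\partial g}{\partial x}(x^*(t_f))$, the \emph{opposite} sign to \eqref{eq:mom_boundary}, and you asserted agreement without checking.

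Your sign is the correct one for $H=\langle p,f\rangle-\lambda_0\ell$ with a maximum condition and $\dot p=-\partial H/\partial x$; item 4 as printed is false. Take $\dot x=u$, $\mathcal{U}=\mathbb{R}$, $\ell=\tfrac12u^2$, $g(x)=ax$ with $a\neq0$, on $[0,1]$ with $x(0)=0$. Since $J=\int_0^1(\tfrac12u^2+au)\,dt$, the unique optimal control is $u^*\equiv-a$. The adjoint equation makes $p$ constant.
If $\lambda_0>0$, the maximum condition forces $u^*=p/\lambda_0$, i.e.\ $p=-a\lambda_0$, whereas \eqref{eq:mom_boundary} demands $p(1)=a\lambda_0$.
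If $\lambda_0=0$, $\max_{\nu\in\mathbb{R}}p\nu$ exists only for $p=0$, which contradicts nontriviality.

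The printed sign belongs to the minimum-principle convention ($H=\lambda_0\ell+\langle\lambda,f\rangle$ minimized, with $\lambda(t_f)=\lambda_0\,\partial g/\partial x$), presumably carried over from \cite{geo_opt_ctrl}. Under $p=-\lambda$ it becomes $p(t_f)=-\lambda_0\,\partial g/\partial x$. So your argument proves items 1--3 together with the corrected condition $p(t_f)=-\lambda_0\,\frac{\partial g}{\partial x}(x^*(t_f))$. You should have flagged the sign rather than claimed \eqref{eq:mom_boundary}.

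Second, Step 3 targets the wrong difficulty. The Brouwer/degree argument is what terminal \emph{constraints} require. Here the endpoint is free, and as you phrased it (directions in the interior of $K$) the claim says nothing when $K$ has empty interior, so your conclusion that $d\Phi\geq0$ on $K$ does not follow from it. It is also unnecessary. Each needle, or finite nonnegative combination of needles, is itself an admissible family $u_\varepsilon$ with $\hat x_\varepsilon(t_f)=\hat x^*(t_f)+\varepsilon d+o(\varepsilon)$. Optimality then gives $d\Phi\cdot d\geq0$ for every $d\in K$ directly, and you can simply set $\hat p(t_f)=-d\Phi(\hat x^*(t_f))$, i.e.\ $\lambda_0=1$.

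For the same reason, your remark on abnormality is wrong in this setting. A nonzero covector that is nonnegative on the open half-space $\{d\Phi\cdot d<0\}$ must be a positive multiple of $-d\Phi$, so $\lambda_0>0$ is forced. Equivalently, $\lambda_0=0$ would give $p(t_f)=0$ and hence $p\equiv0$.

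Steps 1 and 2 are fine, as is the rest of Step 4: the adjoint equation, the maximum condition at continuity points extended by one-sided limits, and nontriviality.
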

	\begin{remark}\label{rmk:pmp}\mbox{}
		\begin{enumerate}
			\item As the data in the optimal control problem is time-invariant, so is the resulting Hamiltonian and optimality conditions. We make this assumption as we are ultimately interested in constructing stationary (i.e. time-invariant) feedback laws.
			\item If the multiplier $\lambda_0\ne 0$, the trajectory is considered \textit{normal} and \textit{abnormal} otherwise. While abnormal trajectories regularly exist and are important, the systems considered below will be normal and normalized with $\lambda_0=1$.
			\item The control synthesized by Theorem \ref{thm:pmp} is of the form $u = u(t)$ which is not of the form $u(x)$ as is desired for feedback stabilization, recall Definition \ref{def:feedback_stable}. The former is called \textit{open-loop} while the latter is called \textit{closed-loop} control. A modification of Theorem \ref{thm:pmp} to construct closed-loop control laws is given below.
		\end{enumerate}
	\end{remark}
	\subsection{Feedback Stabilization via Optimal Control}
	Suppose that $\tilde{x}$ is a fixed point that is to be stabilized and $f(\tilde{x}, 0)=0$. A \textit{closed-loop} feedback law $u = k(x)$ can be synthesized by imposing the infinite-horizon cost
	\begin{equation*}
		J(u) := \int_{t_0}^\infty \, \ell(x(s), u(s)) \, ds,
	\end{equation*}
	where $\ell(x,u)\geq 0$ and $\ell(\tilde{x},0)=0$. 
	The maximum principle, Theorem \ref{thm:pmp}, states that optimal trajectories satisfy
	\begin{equation}\label{eq:hamiltonian_feedback}
		\begin{split}
			\dot{x} &= f(x, u(x,p)), \\
			\dot{p} &= \frac{\partial \ell}{\partial x}(x, \mu(x,p)) - p\frac{\partial f}{\partial x}(x, \mu(x,p)), \\
			\mu(x,p) &= \arg\max_{\nu \in \mathcal{U}} H(p, x, \nu),
		\end{split}
	\end{equation}
	where $\lambda_0$ has been omitted by Remark \ref{rmk:pmp}. However, the control law depends on the co-state as well as the state. To remove the dependency on the co-state, denote the state trajectory of \eqref{eq:hamiltonian_feedback} by $x(t;x_0,p_0)$ where $x_0$ and $p_0$ denote the initial values. Then, define
	\begin{equation}\label{eq:stable_momenta}
		\rho(x_0) \text{~s.t.~} \lim_{t\to\infty} \, x(t; x_0, \rho(x_0)) = x_0.
	\end{equation}
	Assuming that there exists a unique solution for $\rho(x_0)$, a stabilizing feedback law is given by
	\begin{equation*}
		k(x) = \mu(x, \rho(x)).
	\end{equation*}
	While Brockett's condition provides a \textit{topological} obstruction, we will explore the \textit{dynamical} obstructions to the procedure \eqref{eq:hamiltonian_feedback} and \eqref{eq:stable_momenta}. As the dynamical system \eqref{eq:hamiltonian_feedback} is Hamiltonian, it is natural to translate this problem to symplectic geometry. Moreover, the condition \eqref{eq:stable_momenta} translates to an invariant Lagrangian submanifold.
	
	\begin{remark}
		The function \eqref{eq:stable_momenta} is related to control Lyapunov functions (recall Definition \ref{def:clf}). As will be clear in the following section, $H(x, \rho(x)) = 0$ for all $x$. Let $S$ be a function such that $-\nabla S = \rho$. Then $S$ is a control Lyapunov function. Indeed,
		\begin{equation*}
			\langle \nabla S(x), f(x,u)\rangle = -\langle \rho(x), f(x,u) \rangle = -\ell\left(x, \mu(x, \rho(x))\right) \leq 0,
		\end{equation*}
		with equality when $x=\tilde{x}$.
	\end{remark}
	
	
	\section{Symplectic Geometry}\label{sec:symplectic}
	This section provides a quick review of symplectic geometry, see \cite{berndt2001} for a concise overview. Throughout, it will be assumed that all manifolds are finite-dimensional, and all objects are smooth.
	
	\begin{definition}[Symplectic Manifold]
		A \textit{symplectic manifold} is a pair $(M,\omega)$ where $M$ is a (smooth, finite-dimensional) manifold and $\omega\in\Omega^2(M)$ is a closed, non-degenerate two-form.
	\end{definition}
	
	The central example of symplectic manifolds and those that will be studied in this work are the contangent bundles. Let $Q$ be a $n$-dimensional manifold and $\pi_Q:T^*Q\to Q$ be its cotangent bundle. The Liouville form $\vartheta\in\Omega^1(T^*Q)$ is given via
	\begin{equation*}
		\vartheta_{(x,p)}(\dot{x},\dot{p}) = p\left( \dot{x}\right) ,
	\end{equation*}
	where $\vartheta = p_i dx^i$ in local coordinates; repeated indices implying a summation will be used throughout.
	The Liouville form induces a symplectic structure on $T^*Q$ via
	\begin{equation}\label{eq:cotangent_hamiltonian}
		\omega = -d\vartheta = dx^i\wedge dp_i.
	\end{equation}
	Symplectic manifolds are the natural structures for Hamiltonian systems.
	\begin{definition}[Hamiltonian System]
		Let $(M,\omega)$ be a symplectic manifold and $H:M\to \mathbb{R}$ a smooth function. Then the unique vector field $X_H\in\mathfrak{X}(M)$ is called a Hamiltonian vector field with Hamiltonian function $H$ if
		\begin{equation}\label{eq:ham_eq}
			i_{X_H}\omega = dH,
		\end{equation}
		where $i_{X}$ is the inner multiplication of a vector field and a differential form. The triple $(M,\omega,X_H)$ is called a \textit{Hamiltonian system}.
	\end{definition}
	In local coordinates with the symplectic form given by \eqref{eq:cotangent_hamiltonian}, Hamilton's equations \eqref{eq:ham_eq} take the familiar form:
	\begin{equation}\label{eq:local_hamilton_eq}
		\dot{x}^i = \frac{\partial H}{\partial p_i}, \quad \dot{p}_i = -\frac{\partial H}{\partial x^i}.
	\end{equation}
	In the remainder of this work, $M$ will refer to an arbitrary symplectic manifold while $T^*Q$ will carry the canonical form \eqref{eq:cotangent_hamiltonian}.
	
	It can be seen that Pontraygin's maximum principle, Theorem \ref{thm:pmp}, induces a Hamiltonian system on $T^*Q$ with Hamiltonian
	\begin{gather*}
		H:T^*Q\to\mathbb{R} \\
		H(x, p) = \max_{u\in\mathcal{U}} \, \langle p, f(x,u)\rangle - \ell(x, u).
	\end{gather*}
	Hamilton's equations \eqref{eq:ham_eq} with the symplectic form \eqref{eq:cotangent_hamiltonian} is equivalent to the adjoint equations \eqref{eq:hamiltonian_feedback}.
	
	Two fundamental features of Hamiltonian systems are energy and volume conservation.
	\begin{theorem}\label{thm:symplectic_preserve}
		Let $(M,\omega, X_H)$ be a Hamiltonian system with Hamiltonian function $H$. Then the flow $\varphi_t$ induced by the dynamics $\dot{z}=X_H(z)$ satisfies:
		\begin{description}
			\item[Energy Conservation:] $\varphi_t^*H=H$.
			\item[Symplecticity:] $\varphi_t^*\omega = \omega$.
		\end{description}
	\end{theorem}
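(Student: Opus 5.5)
The plan is to prove both statements infinitesimally. Since $\varphi_t$ is a flow, $\frac{d}{dt}\varphi_t^*\alpha = \varphi_t^*\mathcal{L}_{X_H}\alpha$ for any tensor field $\alpha$. It therefore suffices to show that the Lie derivatives $\mathcal{L}_{X_H}H$ and $\mathcal{L}_{X_H}\omega$ both vanish. Then $\varphi_t^*H$ and $\varphi_t^*\omega$ are constant in $t$, and at $t=0$ they equal $H$ and $\omega$ because $\varphi_0=\mathrm{id}$.

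\textbf{Energy conservation.} Compute $\mathcal{L}_{X_H}H = dH(X_H)$. By the defining relation \eqref{eq:ham_eq} this equals $(i_{X_H}\omega)(X_H) = \omega(X_H,X_H)$, which vanishes because $\omega$ is skew-symmetric. As a check in local coordinates, \eqref{eq:local_hamilton_eq} gives
\[
\frac{d}{dt}H = \frac{\partial H}{\partial x^i}\frac{\partial H}{\partial p_i} - \frac{\partial H}{\partial p_i}\frac{\partial H}{\partial x^i} = 0 .
\]

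\textbf{Symplecticity.} Apply Cartan's magic formula:
\[
\mathcal{L}_{X_H}\omega = d\, i_{X_H}\omega + i_{X_H} d\omega .
\]
The first term is $d(dH)=0$ by \eqref{eq:ham_eq}. The second term vanishes because $\omega$ is closed, as required in the definition of a symplectic manifold. Hence $\mathcal{L}_{X_H}\omega=0$, and the derivative identity above gives $\varphi_t^*\omega=\omega$.

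The only step needing real care is the identity $\frac{d}{dt}\varphi_t^*\alpha = \varphi_t^*\mathcal{L}_{X_H}\alpha$ at arbitrary times $t$, not just at $t=0$. I would derive it from the group property $\varphi_{t+s}=\varphi_s\circ\varphi_t$, which gives $\varphi_{t+s}^* = \varphi_t^*\varphi_s^*$, and then differentiate in $s$ at $s=0$. Where the flow is only locally defined, the argument is carried out on the domain of definition of $\varphi_t$. The remaining steps are routine.
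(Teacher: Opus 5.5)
Your argument is correct: $dH(X_H)=\omega(X_H,X_H)=0$ gives energy conservation, Cartan's formula with $i_{X_H}\omega=dH$ and $d\omega=0$ gives $\mathcal{L}_{X_H}\omega=0$, and the group property $\varphi_{t+s}^*=\varphi_t^*\varphi_s^*$ extends these infinitesimal statements to all $t$ on the domain of the flow. The paper gives no proof of this theorem and treats it as standard background, so there is nothing to compare against; your proof is the standard textbook argument the paper relies on.
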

	
	\subsection{Isotropic Submanifolds}
	While the adjoint equations in optimal control \eqref{eq:hamiltonian_feedback} are Hamiltonian, the stability condition \eqref{eq:stable_momenta} is encoded by an invariant Lagrangian submanifold, which is a maximal isotropic submanifold.
	\begin{definition}[Isotropic submanifolds]
		An immersed submanifold $\iota:N\hookrightarrow (M,\omega)$ is \textit{isotropic} if $\iota^*\omega = 0$. An isotropic submanifold is Lagrangian if $\dim N = 1/2\dim M$.
	\end{definition}
	
	\begin{proposition}[{\cite[5.3.32]{abraham_marsden}}]\label{prop:inv_lagrangian}
		Let $\Lambda\subset (M,\omega)$ be Lagrangian and $H\in C^\infty(M)$.
		\begin{enumerate}
			\item If $\varphi_t$ is the flow of $X_H$, then $\varphi_t(\Lambda)\subset M$ is Lagrangian.
			\item If $H$ is constant on $\Lambda$, then $\Lambda$ is invariant under the flow of $X_H$.
		\end{enumerate}
	\end{proposition}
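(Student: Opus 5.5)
Both parts will be proved directly from Theorem \ref{thm:symplectic_preserve} together with the identity $\omega(X_H,\cdot)=dH$ from \eqref{eq:ham_eq}.

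For part 1, fix $\iota:\Lambda\hookrightarrow M$ and set $\iota_t=\varphi_t\circ\iota$. Since $\varphi_t$ is a diffeomorphism (where defined), $\iota_t$ is again an immersion with $\dim\varphi_t(\Lambda)=\dim\Lambda=\tfrac12\dim M$. Isotropy follows from
\begin{equation*}
\iota_t^*\omega=\iota^*\varphi_t^*\omega=\iota^*\omega=0,
\end{equation*}
using symplecticity from Theorem \ref{thm:symplectic_preserve}. Hence $\varphi_t(\Lambda)$ is Lagrangian. This part is routine.

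For part 2, first show that $X_H$ is tangent to $\Lambda$. Let $z\in\Lambda$ and $v\in T_z\Lambda$. Because $H|_\Lambda$ is constant, $dH_z(v)=0$, so $\omega(X_H(z),v)=dH_z(v)=0$ for every $v\in T_z\Lambda$. This means $X_H(z)\in(T_z\Lambda)^{\omega}$, the symplectic orthogonal. For a Lagrangian subspace, $T_z\Lambda\subset(T_z\Lambda)^\omega$ by isotropy, and nondegeneracy gives $\dim(T_z\Lambda)^\omega=\dim M-\dim\Lambda=\dim\Lambda$. So the two spaces coincide and $X_H(z)\in T_z\Lambda$.

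Next, use tangency to get invariance. Since $X_H$ is tangent to $\Lambda$, its restriction defines a smooth vector field on $\Lambda$. By uniqueness of ODE solutions, an integral curve of this restricted field starting at $z\in\Lambda$ is also the integral curve of $X_H$ in $M$. Therefore $\varphi_t(z)\in\Lambda$ for as long as that curve stays in $\Lambda$.

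The main obstacle is this final step, because tangency alone only gives local invariance. If $\Lambda$ is merely immersed, or not closed, the curve could leave $\Lambda$ in finite time inside its maximal interval in $M$. I would therefore state the invariance for the maximal flow on $\Lambda$, meaning $\varphi_t(\Lambda)\subset\Lambda$ locally in time. This upgrades to full invariance when $\Lambda$ is closed (embedded) or when the restricted flow is complete, which is the setting of the reference \cite[5.3.32]{abraham_marsden}.
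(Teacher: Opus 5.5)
Your argument is correct. The paper gives no proof of its own and simply cites Abraham--Marsden 5.3.32, so your argument is exactly the standard one: symplecticity of $\varphi_t$ for part 1, and $X_H(z)\in(T_z\Lambda)^{\omega}=T_z\Lambda$ followed by uniqueness of integral curves for part 2. Your caveat on part 2 is warranted, since as literally stated it fails for non-closed $\Lambda$. For example, take $\Lambda=\{(x,0):0<x<1\}\subset\mathbb{R}^2$ and $H=p$, whose flow is translation in $x$. So invariance should be read as local invariance, or as holding under the closedness or completeness hypothesis you describe.
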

	
	\begin{example}
		Let $\mathbb{R}^{2n}$ be endowed with the symplectic form: 
		\begin{equation*}
			\omega\left( (x,u), (y,v) \right) = v^\top x - u^\top y.
		\end{equation*}
		A subspace of the form
		\begin{equation}\label{eq:LQR_lagrangian}
			\Lambda = \left\{ (x, Px) : x\in\mathbb{R}^n\right\},
		\end{equation}
		is Lagrangian if and only if $P=P^\top$. Let $H$ be a quadratic Hamiltonian,
		\begin{equation*}
			H(x,p) = \frac{1}{2} x^\top Q x + p^\top A x + \frac{1}{2}p^\top R p.
		\end{equation*}
		The condition that $\Lambda \in H^{-1}(0)$ is equivalent to the continuous-time algebraic Riccati equation:
		\begin{equation}\label{eq:CARE}
			Q + A^\top P + PA + PRP = 0.
		\end{equation}
		If $P$ solves \eqref{eq:CARE}, then the subspace \eqref{eq:LQR_lagrangian} is invariant under the dynamics
		\begin{equation*}
			\dot{x} = Ax + Rp, \quad \dot{p} = -Qx - A^\top p.
		\end{equation*}
	\end{example}
	
	A characterization of Lagrangian/isotropic submanifolds is given by the following proposition, cf. \cite[5.3.15]{abraham_marsden}.
	\begin{proposition}\label{prop:closed_Lagrangian}
		Let $M = T^*Q$ be symplectic with the canonical symplectic form and let $S\subset Q$ be a submanifold. Then:
		\begin{enumerate}
			\item $Z_Q \subset T^*Q$ is Lagrangian, where $Z_Q$ is the zero section,
			\begin{equation*}
				Z_Q = \left\{ (x, 0) : x\in Q\right\} \subset T^*Q.
			\end{equation*}
			\item $T_x^*Q \subset T^*Q$ is Lagrangian for a fixed $x\in Q$.
			\item Let $\alpha$ be a 1-form on $Q$ and $\Gamma_\alpha \subset T^*Q$ its graph. Then $\Gamma_\alpha$ is Lagrangian if and only if $\alpha$ is closed.
		\end{enumerate}
		Additionally, submanifolds of Lagrangian manifolds are isotropic. In particular,
		\begin{enumerate}\setcounter{enumi}{3}
			\item $Z_S\subset T^*Q$ is isotropic, where
			\begin{equation*}
				Z_S = \left\{ (x, 0) : x\in S\right\} \subset T^*Q.
			\end{equation*}
			\item Subspaces of a cotangent fiber, $V\subset T_x^*Q$, are isotropic.
			\item If $\dim N = 1$, then $N$ is isotropic.
		\end{enumerate}
	\end{proposition}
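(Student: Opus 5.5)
We prove the six items in the order listed, working in canonical coordinates $(x^i,p_i)$ in which $\omega = dx^i\wedge dp_i$ by \eqref{eq:cotangent_hamiltonian}. In each case the dimension count for Lagrangian is immediate: $\dim Z_Q=\dim T_x^*Q = \dim\Gamma_\alpha = n = \tfrac12\dim T^*Q$. So the work is checking that the pullback of $\omega$ vanishes.

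We treat item 3 first and obtain item 1 as a special case. Let $s_\alpha:Q\to T^*Q$, $x\mapsto (x,\alpha(x))$, be the section parametrizing $\Gamma_\alpha$. The key identity is the tautological property of the Liouville form, $s_\alpha^*\vartheta=\alpha$. This follows directly from $\vartheta_{(x,p)}(\dot x,\dot p)=p(\dot x)$, because $d\pi_Q\circ ds_\alpha=\mathrm{id}$. Hence
\begin{equation*}
s_\alpha^*\omega=-s_\alpha^*d\vartheta=-d\,s_\alpha^*\vartheta=-d\alpha,
\end{equation*}
and $s_\alpha$ is a diffeomorphism onto $\Gamma_\alpha$. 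Therefore $\Gamma_\alpha$ is Lagrangian if and only if $d\alpha=0$. Item 1 is the case $\alpha=0$, which is trivially closed.

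For item 2, parametrize the fiber by $p\mapsto (x,p)$ with $x$ fixed. Along the fiber $dx^i=0$, so every term of $dx^i\wedge dp_i$ pulls back to zero.

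The remaining items rest on one general remark: if $N'\subset N$ is a submanifold and $\iota_N^*\omega=0$, then the inclusion of $N'$ factors through $N$, so its pullback of $\omega$ is zero as well. Item 4 follows because $Z_S\subset Z_Q$, which is Lagrangian by item 1. Item 5 follows because $V\subset T_x^*Q$, which is Lagrangian by item 2. For item 6 no ambient Lagrangian is needed: $\iota^*\omega$ is a $2$-form on a $1$-manifold, so it vanishes for degree reasons, since any two tangent vectors are proportional and $\omega$ is skew.

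The only step requiring care is the identity $s_\alpha^*\vartheta=\alpha$ and the commuting of pullback with $d$. Both are standard, so there is no real obstacle. The closedness of $\omega$ and $\omega=-d\vartheta$ from \eqref{eq:cotangent_hamiltonian} are what make the computation go through.
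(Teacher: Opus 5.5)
Your proof is correct and follows the standard argument that the paper defers to; the paper gives no proof of its own and only cites Abraham--Marsden 5.3.15. In that argument the tautological identity $s_\alpha^*\vartheta=\alpha$ gives $s_\alpha^*\omega=-d\alpha$, items 1--2 follow by direct computation, items 4--5 follow by restricting a vanishing pullback, and your skew-symmetry argument for item 6 is more direct than the paper's framing of it as a special case of lying inside a Lagrangian submanifold, since a curve is not a priori contained in one.
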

	
	\begin{corollary}
		A Lagrangian submanifold $\Lambda\subset T^*Q$ is locally the image of a differential of a function on $Q$ if and only if
		\begin{enumerate}
			\item $\Lambda$ is isotropic, and
			\item $\Lambda$ is transverse to the fibers of $T^*Q$.
		\end{enumerate}
	\end{corollary}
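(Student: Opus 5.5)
The plan is to prove the two implications separately, using local coordinates $(x^i,p_i)$ on $T^*Q$. In these coordinates $\vartheta = p_i\,dx^i$ and $\omega = dx^i\wedge dp_i$. The statement is local, so I will work on a neighborhood $U\subset Q$ that is diffeomorphic to a ball. On such a $U$, Poincaré's lemma applies, and every closed one-form is exact.

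For sufficiency, assume conditions 1 and 2 hold. Condition 2 (transversality to the fibers) says that $T_z\Lambda \cap \ker (d\pi_Q)_z = 0$ at every $z\in\Lambda$. Because $\dim\Lambda = n = \dim Q$, this makes $d(\pi_Q|_\Lambda)$ injective, and hence an isomorphism. The inverse function theorem then shows that $\pi_Q|_\Lambda$ is a local diffeomorphism onto an open set $U$, possibly after shrinking. So, locally, $\Lambda$ is the graph $\Gamma_\alpha$ of a smooth one-form $\alpha = \alpha_i(x)\,dx^i$, namely $\alpha = (\pi_Q|_\Lambda)^{-1}$ viewed as a section. Next I use the map $s_\alpha : Q\to T^*Q$ given by $s_\alpha(x) = (x,\alpha(x))$. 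The tautological identity $s_\alpha^*\vartheta = \alpha$ follows directly from $\vartheta = p_i dx^i$. This gives $s_\alpha^*\omega = -d\alpha$. Condition 1 says $\Lambda$ is isotropic, so $s_\alpha^*\omega = 0$ and therefore $d\alpha = 0$; this is exactly Proposition \ref{prop:closed_Lagrangian}(3). By Poincaré's lemma on the ball $U$, $\alpha = dS$ for some smooth $S:U\to\mathbb{R}$, and so $\Lambda\cap\pi_Q^{-1}(U) = \{(x,dS(x))\}$.

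For necessity, suppose $\Lambda$ is locally $\{(x, dS(x))\}$. Then $\Lambda$ is the image of the section $s_{dS}$, and $d\pi_Q\circ ds_{dS} = \mathrm{id}$. Consequently no nonzero tangent vector of $\Lambda$ is vertical, which is transversality to the fibers. Isotropy follows from $s_{dS}^*\omega = -d(dS) = 0$.

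The only delicate step is the passage from transversality to the local graph property. Here I have to be careful that $\pi_Q|_\Lambda$ is only a \emph{local} diffeomorphism. For an immersed $\Lambda$ it need not be injective globally, which is why the conclusion is stated only locally. I will therefore restrict to a sufficiently small neighborhood in $\Lambda$ on which $\pi_Q$ is injective before defining $\alpha$. Beyond that, the identity $s_\alpha^*\vartheta=\alpha$ is a one-line coordinate check, and Poincaré's lemma supplies exactness. The argument also explains why condition 1 is listed even though $\Lambda$ is assumed Lagrangian: isotropy is already implied, and the real content of the statement is that condition 2 is the only extra hypothesis needed.
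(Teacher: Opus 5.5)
Your proof is correct and is essentially the argument the paper leaves implicit: the corollary is stated without proof as a consequence of Proposition~\ref{prop:closed_Lagrangian}(3), with transversality giving a local graph $\Gamma_\alpha$, isotropy giving $d\alpha=0$, and the Poincar\'e lemma giving $\alpha=dS$. One small fix: the displayed conclusion should read $W=\{(x,dS(x)):x\in U\}$ for the local sheet $W\subset\Lambda$ through your chosen point, not $\Lambda\cap\pi_Q^{-1}(U)$, which may contain several sheets over $U$ (as in the cubic example of \S\ref{sec:local_stable}); your final paragraph already recognizes this.
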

	
	\begin{definition}[Generating Function]
		Let $\Lambda\subset T^*Q$ be a Lagrangian submanifold. A function $S:Q\to\mathbb{R}$ is a (local) \textit{generating function} if $\Lambda$ is (locally) the graph of $dS$.
	\end{definition}
	It is not uncommon for Lagrangian submanifolds to fail to have a global generating function; this is tied to the well-known fact that the Hamilton-Jacobi(-Bellman) equations do not generally admit smooth solutions \cite{viscosity_fields}.
	Shocks in solutions to the HJB correspond to locations where $\Lambda$ fails to be transverse to the fibers of $T^*Q$ and are called \textit{caustics}. An initial study on caustics are given in \cite{guckenheimer}. See also \cite[Appendix~12]{arnold}. 
	\begin{definition}[Caustic]
		The \textit{caustic} of a Lagrangian manifold $\Lambda\subset T^*Q$ is the critical value set of $\pi_Q:T^*Q\to Q$. This set is denoted by $\Sigma(\Lambda)\subset Q$.
	\end{definition}
	\begin{example}
		The set $\Sigma(\Lambda)$ can be fairly complicated. Consider the Hamiltonian on $\mathbb{R}^2$:
		\begin{equation*}
			H(x, p) = p\left( x - e^{-p}\sin p \right).
		\end{equation*}
		An invariant Lagrangian submanifold is
		\begin{equation}\label{eq:wiggly_Lagrangian}
			\Lambda = \left\{ (e^{-p}\sin p, p) : p\in\mathbb{R} \right\}.
		\end{equation}
		The caustic set is (see Fig. \ref{fig:wiggly_Lagrangian})
		\begin{equation*}
			\Sigma(\Lambda) = \left\{ \pm \frac{1}{\sqrt{2}} \exp\left( \frac{1}{4}(4n\pi + \pi) \right) : n\in\mathbb{Z} \right\}.
		\end{equation*}
		\begin{figure}
			\centering
			\begin{tikzpicture}
				\draw[thick, ->] (-4,0) -- (4,0);
				\draw[thick, ->] (0,-1) -- (0,5);
				\draw[very thick, blue, domain=-0.5:5, smooth] plot ({4*exp(-0.5*\x)*sin(deg(1.75*\x))}, \x); 
				\node[below] at (4,0) {$x$};
				\node[left] at (0,5) {$p$};
				\node[blue] at (3,1) {$\Lambda$};
				\draw[dashed] (-1.0835,2.5338) -- (-1.0835,0);
				\draw[dashed] (0.4416,4.3290) -- (0.4416,0);
				\draw[dashed] (2.6585,0.7386) -- (2.6585,0);
				\draw[red, fill] (-1.0835,2.5338) circle [radius=0.1];
				\draw[red, fill] (0.4416,4.3290) circle [radius=0.1];
				\draw[red, fill] (2.6585,0.7386) circle [radius=0.1];
				\draw[fill] (-1.0835,0) circle [radius=0.1];
				\draw[fill] (0.4416,0) circle [radius=0.1];
				\draw[fill] (2.6585,0) circle [radius=0.1];
			\end{tikzpicture}
			\caption{Three points in the caustic set $\Sigma(\Lambda)$ for the Lagrangian submanifold \eqref{eq:wiggly_Lagrangian}.}
			\label{fig:wiggly_Lagrangian}
		\end{figure}
	\end{example}
	
	\section{Stabilizing Manifolds}\label{sec:hyperbolic_fixed}
	We return to the problem of feedback synthesis. Let $S\subset Q$ be a submanifold that we wish to stabilize. In particular, assume that
	\begin{enumerate}
		\item $S$ is invariant under the uncontrolled flow,
		\begin{equation*}
			f(x, 0) \in T_xS, \quad x\in S.
		\end{equation*}
		\item and the running cost is minimized on $S$,
		\begin{equation*}
			\ell(x, u) \geq 0, \quad x\in S \implies \ell(x,0) = 0.
		\end{equation*}
	\end{enumerate}
	Does the procedure \eqref{eq:hamiltonian_feedback} and \eqref{eq:stable_momenta} produce a feedback that makes the manifold $S$ asymptotically stable?
	As a first step to answering this question, we state the following symplectic result which is a strict generalization of fixed points (see \cite[\S 6.4]{katok_hasselblatt} and \cite[Lemma~1]{vdschaft_h_infty}). Recall that for a given $S\subset Q$, the zero section $N=Z_S\subset T^*Q$ is isotropic.
	\begin{theorem}\label{thm:att_man_iso}
		Let $N \subset M$ be an isotropic submanifold with stable manifold,
		\begin{equation*}
			W^s(N) = \left\{ x\in M : \lim_{t\to\infty} \varphi_t(x) \in N\right\},
		\end{equation*}
		which satisfies Assumption \ref{ass:naim}.
		Then $W^s(N)\subset M$ is also an isotropic manifold.
	\end{theorem}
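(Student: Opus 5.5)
The plan is the classical argument for stable manifolds of hyperbolic fixed points, with the fixed point replaced by $N$: combine symplecticity of the flow with the convergence in (A2). Fix $z\in W^s(N)$ and two tangent vectors $v,w\in T_z W^s(N)$; by (A1), $T_zW^s(N)$ is a well-defined subspace, at least locally. The goal is to show $\omega_z(v,w)=0$.

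First, Theorem \ref{thm:symplectic_preserve} gives $\varphi_t^*\omega=\omega$. Hence for every $t\geq 0$
\begin{equation*}
	\omega_z(v,w) = \omega_{\varphi_t(z)}\bigl((\varphi_t)_*v,(\varphi_t)_*w\bigr).
\end{equation*}
Since $W^s(N)$ is invariant under the flow, $(\varphi_t)_*v$ and $(\varphi_t)_*w$ lie in $T_{\varphi_t(z)}W^s(N)$.

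Second, let $t\to\infty$. The base points $\varphi_t(z)$ converge to a point $y\in N$ by the definition of $W^s(N)$. By (A2) the tangent vectors $(\varphi_t)_*v$ and $(\varphi_t)_*w$ converge in $TM$ to vectors $v_\infty,w_\infty\in T_yN$. The right-hand side is constant in $t$, and $\omega$ is continuous as a function on $TM\oplus TM$. Therefore
\begin{equation*}
	\omega_z(v,w)=\omega_y(v_\infty,w_\infty)=(\iota^*\omega)_y(v_\infty,w_\infty)=0,
\end{equation*}
where the last equality uses that $N$ is isotropic. Since $v$ and $w$ were arbitrary, $\iota^*\omega=0$ on $W^s(N)$, so $W^s(N)$ is isotropic. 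Its manifold structure is exactly (A1).

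The main obstacle is interpreting (A2) so that this limit is legitimate. The convergence must be genuine convergence of vectors in $TM$, so both base point and fiber component converge and the limit vector is finite. Convergence only of directions, or to something in $TN$ only up to unbounded scaling, would not suffice. I would read (A2) as saying that $\lim_{t\to\infty}(\varphi_t)_*v$ exists in $TM$ and lies in $TN$, and then the continuity argument above applies verbatim. A secondary point is that $\varphi_t(z)$ might only accumulate on $N$ rather than converge to a single point. In that case I would pass to a subsequence $t_k\to\infty$ along which base points and tangent vectors converge, using local compactness near $N$ and boundedness supplied by (A2), and the same identity yields $\omega_z(v,w)=0$.
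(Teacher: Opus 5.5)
Your argument is correct and is the same as the paper's: symplecticity of the flow makes $\omega\bigl((\varphi_t)_*v,(\varphi_t)_*w\bigr)$ independent of $t$, and letting $t\to\infty$ forces it to vanish by (A2), continuity of $\omega$, and isotropy of $N$. You also read (A2) as genuine convergence in $TM$ and handle base points that only accumulate on $N$, which is what actually happens for a periodic target such as $Z_\Gamma$; the paper's one-line proof leaves both points implicit.
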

	\begin{proof}
		Let $v,w\in T_xW^s(N)$. Then 
		\begin{equation*}
			\omega(v,w) = \omega\left( (\varphi_t)_*v, (\varphi_t)_*w\right) = \lim_{t\to\infty} \omega\left( (\varphi_t)_*v, (\varphi_t)_*w\right),
		\end{equation*}
		as the flow is symplectic, $\varphi_t^*\omega=\omega$ (recall Theorem \ref{thm:symplectic_preserve}). The right hand side vanishes as $N$ is assumed to by isotropic, i.e. $\omega(\mu,\nu)=0$ for all $\mu,\nu\in TN$.
	\end{proof}
	
	An important consequence of the above is that 
	\begin{equation}\label{eq:stable_manifold_dimension}
		\dim W^s(Z_S) \leq \dim Q.
	\end{equation}
	Much of the theory on stable manifolds requires some notion of hyperbolicity. Unfortunately, this cannot be the case for isotropic manifolds. See \cite[Proposition~10]{Lomeli_2008} for a similar result concerning diffeomorphisms.
	\begin{theorem}\label{thm:isotropic_nhim}
		Let $N\subset M$ be an isotropic manifold with $\dim N \ne 0$. Then $N$ is not normally hyperbolic under any Hamiltonian flow.
	\end{theorem}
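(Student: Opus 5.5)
The plan is to argue by contradiction, using only invariance of $\omega$ under the flow (Theorem \ref{thm:symplectic_preserve}) together with the rate conditions in the definition of normal hyperbolicity. Suppose $N$ is normally hyperbolic for the flow $\varphi_t$ of some $X_H$, and let $k=\dim N\geq 1$. Then along $N$ there is a continuous, $(\varphi_t)_*$-invariant splitting
\begin{equation*}
T_xM = T_xN\oplus E^s_x\oplus E^u_x .
\end{equation*}
I will use the standard rate conditions in the following form. There are constants $C>0$ and $\lambda>\mu\geq 0$ such that for $t\geq 0$:
\begin{itemize}
\item $|(\varphi_t)_*w|\leq Ce^{-\lambda t}|w|$ for $w\in E^s$;
\item $|(\varphi_{-t})_*w|\leq Ce^{-\lambda t}|w|$ for $w\in E^u$;
\item $|(\varphi_{\pm t})_*v|\leq Ce^{\mu t}|v|$ for $v\in TN$.
\end{itemize}
That is, normal contraction and expansion strictly dominate the tangential rates in both time directions. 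I will work with a fixed Riemannian metric on a neighbourhood of $N$, assuming $N$ compact or the metric uniform as is standard in the NHIM setting. Then $\omega$ is uniformly bounded: $|\omega(a,b)|\leq K|a||b|$.

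The first key step is to show that $TN$ is $\omega$-orthogonal to $E^s$. Take $v\in T_xN$ and $w\in E^s_x$. Since $\varphi_t^*\omega=\omega$,
\begin{equation*}
|\omega(v,w)| = |\omega((\varphi_t)_*v,(\varphi_t)_*w)| \leq KC^2 e^{(\mu-\lambda)t}|v||w| \longrightarrow 0 \quad (t\to\infty).
\end{equation*}
Hence $\omega(v,w)=0$. This is the same mechanism as in the proof of Theorem \ref{thm:att_man_iso}. The second step is the symmetric argument in backward time. For $w\in E^u_x$, apply $\varphi_{-t}$ and use the backward rate bounds to obtain $\omega(v,w)=0$ for all $v\in TN$. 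The third step uses the hypothesis that $N$ is isotropic, which gives $\omega(TN,TN)=0$. By the splitting, every $v\in T_xN$ is then $\omega$-orthogonal to all of $T_xM$. Non-degeneracy of $\omega$ forces $v=0$, so $T_xN=0$, contradicting $\dim N\neq 0$.

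The main obstacle is pinning down the precise definition of normal hyperbolicity so that both domination inequalities hold in the form used above. Some definitions phrase domination pointwise or via ratios of Lyapunov-type exponents rather than uniform exponential bounds. If needed, I would instead use the generalized ratio condition $\sup|(\varphi_t)_*|_{E^s}|\cdot\sup|(\varphi_{-t})_*|_{TN}|\to 0$. This implies that the product $|(\varphi_t)_*v|\,|(\varphi_t)_*w|$ tends to zero after renormalizing $v$ by the inverse tangent flow. Concretely, for $w\in E^s_x$ I would write $v=(\varphi_{-t})_*v'$ with $v'\in T_{\varphi_t x}N$, so that $\omega(v,w)=\omega(v',(\varphi_t)_*w)$. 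The bound then involves only $|(\varphi_{-t})_*|_{TN}|^{-1}$-type quantities, which is exactly what the ratio condition controls.

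A secondary point is that compactness, or bounded geometry, is needed for the uniform bound $K$ on $\omega$. For noncompact $N$ I would state this as part of the standing NHIM hypotheses.
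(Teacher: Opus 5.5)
Your proof is correct under the symmetric rate conditions you fix, but it closes the argument differently from the paper. The paper invokes Theorem~\ref{thm:att_man_iso}, via the assertion that NHIMs satisfy Assumption~\ref{ass:naim}, to conclude that $T_xN\oplus E^s_x$ and $T_xN\oplus E^u_x$ are isotropic. Each therefore has dimension at most $n$, and the contradiction comes from counting dimensions against $\dim T_xM=2n$.

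You instead prove only the cross relations $\omega(T_xN,E^s_x)=\omega(T_xN,E^u_x)=0$, directly from the rates. You then conclude by nondegeneracy that $T_xN\subset (T_xM)^{\omega}=0$.

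The paper's route is shorter given the machinery already in place, and it ties into the bound $\dim W^s(Z_S)\leq\dim Q$. Yours is self-contained. It needs neither $\omega|_{E^s}=0$ nor the dimension bound for isotropic subspaces. It also makes explicit which domination is used: normal contraction beating tangential growth \emph{in the same time direction}. The paper leaves this implicit in its claim about Assumption~\ref{ass:naim}.

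Your route also yields more. Your first two steps never use isotropy, so they show that $\omega|_{T_xN}$ is nondegenerate for any normally hyperbolic invariant manifold of a Hamiltonian flow. The theorem then reduces to the remark that a nonzero subspace cannot be both symplectic and isotropic.

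One caution: discard the fallback paragraph, because it does not work. With $v'=(\varphi_t)_*v$, the size of $|v'|$ is governed by forward tangential growth. The inequality $|v|\leq\|(\varphi_{-t})_*|_{TN}\|\,|v'|$ only bounds $|v'|$ from below. So the ratio condition $\|(\varphi_t)_*|_{E^s}\|\cdot\|(\varphi_{-t})_*|_{TN}\|\to 0$ gives no control of $\omega(v,w)$.

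Here is a concrete counterexample. Take $H=xp$ on $\mathbb{R}^2$, so the flow is $(x,p)\mapsto(e^tx,e^{-t}p)$. The line $N=\{p=0\}$ is invariant and isotropic, with $E^s=\mathrm{span}\,\partial_p$. The ratio is $e^{-2t}\to 0$, yet $\omega(\partial_x,\partial_p)=1$. So the symmetric condition you adopted is essential, not a convenience.

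If you want the argument to be independent of the definition for compact $N$, use the energy. The form $dH=\omega(X_H,\cdot)$ vanishes on $T_xN$ by isotropy. It also vanishes on $E^s_x$ and $E^u_x$, because $dH$ is flow-invariant while those directions contract. Hence $X_H\equiv 0$ on $N$, the tangential dynamics is trivial, and your Step 1 goes through with any $\lambda>0$.
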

	\begin{proof}
		Suppose that $N$ is normally hyperbolic and let $x\in N$. Then we have the hyperbolic splitting
		\begin{equation*}
			T_xM = T_xN \oplus E^s_x \oplus E^u_x.
		\end{equation*}
		As NHIMs satisfy Assumption \ref{ass:naim}, we know that the stable (and unstable) manifold are isotropic. Therefore,
		\begin{equation*}
			\dim T_xN + \dim E^s_x \leq n ,\quad \dim T_xN + \dim E^u_x \leq n.
		\end{equation*}
		As $\dim T_xN \geq 1$, it follows that 
		\begin{equation*}
			\begin{split}
				2n &= \dim T_x M = \dim T_xN + \dim E_x^s + \dim E_x^u \\
				&< 2\dim T_xN + \dim E_x^s + \dim E_x^u \leq 2n,
			\end{split}
		\end{equation*}
		which is a contradiction.
	\end{proof}
	As hyperbolicity cannot occur in isotropic manifolds (unless $N$ is a fixed point), the closest that can happen is equality in \eqref{eq:stable_manifold_dimension} along with Assumption \ref{ass:naim}.
	\begin{definition}[Symplectic saddle]
		Let $N\subset (M,\omega)$ be an isotropic submanifold and $X_H$ a Hamiltonian vector field. The pair $(N, X_H)$ is a \textit{symplectic saddle} if Assumption \ref{ass:naim} holds and the manifolds $W^s(N)$ and $W^u(N)$ are Lagrangian.
	\end{definition}
	
	\begin{problem}[Symplectic saddle problem]\label{quest:LAP}
		Let $N\subset M$ be an isotropic submanifold and $H:M\to\mathbb{R}$ a Hamiltonian. Is $(N,X_H)$ a symplectic saddle?
	\end{problem}
	Hyperbolic fixed points provide a trivial solution to the above problem. When $N$ is a periodic orbit (recall that all one-dimensional submanifolds are isotropic), the above problem is solved if $N$ is hyperbolic when restricted to an energy level. This is straightforward to determine by checking its Floquet multipliers and ensuring that only two are degenerate. Although it is not stated in this language, if $N$ is an orbit of a group action, then being a symplectic saddle is related to the notion of \textit{relative equilibria}. For a nice presentation on this topic, see \cite{marsden_lectures}. However, this case is quite restrictive and will not be pursued here.
	
	Applying this back to feedback synthesis, we conclude.
	\begin{theorem}\label{thm:main_result}
		Consider an optimal control problem satisfying the assumptions at the beginning of this section, namely that $S$ is invariant and minimizes the cost. Let $H:T^*Q\to\mathbb{R}$ be the induced Hamiltonian and assume that the map
		\begin{gather*}
			\mu:T^*Q\to\mathcal{U} \\
			\mu(x,p) = \arg\max_{u\in\mathcal{U}} \langle p, f(x,u)\rangle - \ell(x,u),
		\end{gather*}
		is smooth. If
		\begin{enumerate}
			\item $(Z_S, X_H)$ is a symplectic saddle, and
			\item $\pi_Q:W^s(Z_S)\to Q$ is a diffeomorphism in a tubular neighborhood of $S$,
		\end{enumerate}
		then there exists (in a tubular neighborhood) a smooth feedback law that stabilizes $S$.
	\end{theorem}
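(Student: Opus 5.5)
Since $(Z_S,X_H)$ is a symplectic saddle, $W^s(Z_S)$ is a Lagrangian submanifold of $T^*Q$ satisfying Assumption \ref{ass:naim}. By hypothesis 2, $\pi_Q$ restricted to $W^s(Z_S)$ is a diffeomorphism over a tubular neighborhood $U$ of $S$. Hence there is a smooth section $\rho:U\to T^*Q$ with
\begin{equation*}
W^s(Z_S)\cap \pi_Q^{-1}(U) = \{(x,\rho(x)) : x\in U\},
\end{equation*}
where $\rho$ is obtained by composing the inverse of $\pi_Q|_{W^s(Z_S)}$ with the inclusion. The candidate feedback is $k(x)=\mu(x,\rho(x))$, which is smooth because $\mu$ and $\rho$ are smooth. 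Since $W^s(Z_S)\supset Z_S$, we have $\rho|_S=0$. The cost $\ell(x,\cdot)$ is minimized at $u=0$ for $x\in S$, so $k|_S=\mu(x,0)=0$. This is the analogue of $u(\tilde x)=0$ in Definition \ref{def:feedback_stable}, and $S$ is invariant under $\dot x=f(x,k(x))$ because $f(x,0)\in T_xS$.

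The key step is to identify the closed-loop trajectories with projections of Hamiltonian trajectories on $W^s(Z_S)$. By definition, the stable set is invariant under the flow $\varphi_t$ of $X_H$. So for $x_0\in U$ the curve $(x(t),p(t))=\varphi_t(x_0,\rho(x_0))$ stays on $W^s(Z_S)$, which forces $p(t)=\rho(x(t))$ as long as $x(t)\in U$. By Hamilton's equations \eqref{eq:local_hamilton_eq}, $\dot x=\partial H/\partial p$. Because $\mu$ is a smooth maximizer, an envelope/Danskin argument gives $\partial H/\partial p(x,p)=f(x,\mu(x,p))$. Therefore
\begin{equation*}
\dot x = f\big(x,\mu(x,\rho(x))\big)=f(x,k(x)),
\end{equation*}
and $x(t)=\pi_Q\varphi_t(x_0,\rho(x_0))$ is exactly the closed-loop trajectory from $x_0$. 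By the definition of $W^s(Z_S)$, $\varphi_t(x_0,\rho(x_0))\to Z_S$, so $\mathrm{dist}(x(t),S)\to 0$, giving attractivity.

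It remains to get Lyapunov stability and to handle trajectories that might leave $U$; I expect this to be the main obstacle. I would first try the control Lyapunov remark. Since $W^s(Z_S)$ is Lagrangian and a graph, Proposition \ref{prop:closed_Lagrangian} makes $\rho$ a closed one-form. On the tubular neighborhood, which retracts onto $S$, I would argue $\rho$ is exact. The idea is that its restriction to $S$ vanishes, so its periods over cycles in $S$ vanish, and $U$ deformation retracts to $S$. This gives $\rho=-dV$ with $V|_S=0$.

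By Proposition \ref{prop:inv_lagrangian} and energy conservation, $H$ is constant on $W^s(Z_S)$. Continuity to $Z_S$, where $H(x,0)=\max_u(-\ell)=0$, gives $H(x,\rho(x))=0$. Then $\dot V = -\langle\rho,f(x,k)\rangle = -\ell(x,k(x))\le 0$, so $V$ is nonincreasing along closed-loop trajectories. Combined with attractivity, this should show that $V$ is positive off $S$ near $S$: integrating $\dot V$ along a trajectory converging to $S$ gives $V(x_0)=\int_0^\infty \ell\,dt\ge 0$.

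Sublevel sets of $V$ then furnish forward-invariant neighborhoods, from which stability follows. If $V$ is not definite (degenerate $\ell$), I would instead use Assumption \ref{ass:naim}(A2) and continuity of $\varphi_t$ near $Z_S$ to get uniform convergence on a compact neighborhood. That would yield stability directly by shrinking $U$.
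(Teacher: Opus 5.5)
Your first two paragraphs are exactly the argument the paper relies on. The paper gives no separate proof of Theorem~\ref{thm:main_result}. It presents the theorem as a direct consequence of three ingredients: Theorem~\ref{thm:att_man_iso}, the graph construction $k(x)=\mu(x,\rho(x))$ behind \eqref{eq:insight}, and the control-Lyapunov remark in \S\ref{sec:pmp}, which supplies your identity $\dot V=-\ell$. The paper never addresses Lyapunov stability or orbits leaving $U$. So the part you flagged is also where your proposal, as written, is not yet a proof, even though Definition~\ref{def:feedback_stable} requires it.

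There are three concrete problems.

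\emph{Circularity.} You obtain $V(x_0)=\int_0^\infty\ell\,dt$ by integrating along a closed-loop orbit that you assume stays in $U$ and converges to $S$. You only know convergence for orbits that stay in $U$, and staying in $U$ is what stability is supposed to give, so the step is circular. You could instead integrate $\vartheta$ along the Hamiltonian orbit, where $\ell=\langle p,\dot x\rangle$ because $H=0$ on $W^s(Z_S)$. That avoids the exit problem, but it needs $\vartheta|_{W^s(Z_S)}$ to be exact on all of $W^s(Z_S)$, a global fact not contained in Assumption~\ref{ass:naim}.

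\emph{Positivity.} Even then you only get $V\ge 0$. The sublevel-set argument needs $V>0$ off $S$, and that does not follow from $\ell\ge 0$ and $\ell(x,0)=0$ on $S$ without an extra positivity or detectability hypothesis.

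\emph{The fallback.} Uniformity cannot come from continuity of $\varphi_t$ plus pointwise convergence, because attractivity does not imply stability (Vinograd's example). What does work, and makes $V$ unnecessary, is (A2) evaluated on $Z_S$, for compact $S$:
\begin{itemize}
\item $D\varphi_t$ induces a linear cocycle $\Psi_t$ on $TW^s(Z_S)|_{Z_S}/TZ_S$, and by (A2) it decays pointwise.
\item For each $z$, finite dimensionality gives a time $T_z$ with $\|\Psi_{T_z}(z)\|<\tfrac12$. Continuity extends this bound to a neighborhood of $z$, and a finite subcover yields uniform exponential decay.
\item A first-order Taylor estimate in a tubular chart of $Z_S$ inside $W^s(Z_S)$ then gives local exponential stability of $Z_S$ for the restricted flow.
\item Your conjugacy by $\pi_Q$ transports this to $S$ under $\dot x=f(x,k(x))$. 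It also guarantees that orbits starting near $S$ never leave $U$.
\end{itemize}
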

	\subsection{Fixed points}
	This section examines the special case where $S = \{\tilde{x}\}$ as hyperbolic fixed points are always a symplectic saddle. We begin with the classical result that states that hyperbolic fixed points in Hamiltonian systems have an equal number of eigenvalues with positive and negative real part.
	\begin{proposition}[{\cite[14.3.5]{wiggins}}]\label{prop:ham_ews}
		Let $A$ be an infinitesimally symplectic matrix. If $\lambda\in\mathbb{C}$ is an eigenvalue of $A$, so are $-\lambda$, $\bar{\lambda}$, and $-\bar{\lambda}$. Moreover, if $0$ is an eigenvalue, then it has even multiplicity.
	\end{proposition}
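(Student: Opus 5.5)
The plan is to use the defining property of an infinitesimally symplectic matrix, namely $A^\top J + J A = 0$ with $J$ the standard symplectic matrix ($J^\top = -J$, $J^2=-I$). Rewriting this as $A^\top = -JAJ^{-1}$ exhibits $A^\top$ as similar to $-A$. Similar matrices share characteristic polynomials, and $A$ and $A^\top$ do too. Hence
\begin{equation*}
	\chi_A(\lambda) = \chi_{A^\top}(\lambda) = \chi_{-A}(\lambda) = \det(\lambda I + A) = (-1)^{2n}\chi_A(-\lambda) = \chi_A(-\lambda),
\end{equation*}
so $\chi_A$ is an even polynomial. Consequently $\lambda$ is a root exactly when $-\lambda$ is, and the two roots have the same algebraic multiplicity.

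Next, $A$ is a real matrix, so $\chi_A$ has real coefficients and its nonreal roots come in conjugate pairs $\lambda,\bar\lambda$ of equal multiplicity. Combining this with the previous step shows that $-\bar\lambda$ is also an eigenvalue. This gives the full quadruple $\{\lambda,-\lambda,\bar\lambda,-\bar\lambda\}$.

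For the statement about zero, I would use evenness again: $\chi_A(\lambda)=\sum_k c_k\lambda^{2k}$ contains only even powers. If $0$ is a root, the lowest nonvanishing power of $\lambda$ in $\chi_A$ is therefore $\lambda^{2k}$ for some $k\ge 1$, so the algebraic multiplicity of $0$ is $2k$, which is even. An alternative route is to note that the nonzero roots pair off as $\pm\lambda$ with equal multiplicities and the total degree is $2n$, so the multiplicity of $0$ must be even. I prefer the direct power-series argument because it does not need care about the case $\lambda=-\lambda$.

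The main obstacle is only the bookkeeping of multiplicities. The similarity argument settles this cleanly because it gives an equality of characteristic polynomials rather than just an equality of eigenvalue sets. The remaining check is the sign $(-1)^{2n}=1$, which uses the fact that the ambient dimension is even.
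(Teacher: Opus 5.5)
Your argument is correct and is essentially the standard proof behind the cited result; the paper itself gives no proof and defers to Wiggins. Conjugation by $J$ gives $\chi_A(\lambda)=\chi_A(-\lambda)$, reality of $A$ supplies $\bar\lambda$ and $-\bar\lambda$, and evenness of $\chi_A$ forces the root at $0$ to have even algebraic multiplicity. You were right to invoke reality of $A$ explicitly and to work with algebraic rather than geometric multiplicity, since e.g.\ $\bigl(\begin{smallmatrix}0&1\\0&0\end{smallmatrix}\bigr)$ is infinitesimally symplectic with a one-dimensional kernel.
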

	
	\begin{corollary}
		Suppose that $A$ is a hyperbolic matrix. Then $\dim E^s = \dim E^u = n$, where $E^s$ and $E^u$ are the stable and unstable subspaces, respectively. In particular, hyperbolic fixed points are a symplectic saddle.
	\end{corollary}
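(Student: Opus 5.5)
The plan is to use Proposition \ref{prop:ham_ews} to show that the eigenvalues of a hyperbolic infinitesimally symplectic matrix pair off as $\lambda\leftrightarrow-\lambda$, giving $\dim E^s=\dim E^u$. Then the local stable manifold theorem together with Theorem \ref{thm:att_man_iso} will show that these invariant manifolds are Lagrangian.

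\textbf{Equal dimensions.} Let $A=DX_H(\tilde z)$ at the fixed point $\tilde z$; it is infinitesimally symplectic. Since $A$ is hyperbolic, $0$ is not an eigenvalue, and no eigenvalue has zero real part. The map $\lambda\mapsto-\lambda$ sends the spectrum to itself, by Proposition \ref{prop:ham_ews}, and it swaps the open left and right half-planes. To get a dimension count rather than a count of distinct eigenvalues, I need the multiplicities to match. For this I use that the characteristic polynomial of an infinitesimally symplectic matrix satisfies $p(\lambda)=p(-\lambda)$. This follows from $A^\top J=-JA$, which gives $A=-J^{-1}A^\top J$; hence $A$ is similar to $-A^\top$, and so $\det(\lambda I-A)=\det(\lambda I+A)$. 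With multiplicities matched, the generalized eigenspaces give $\dim E^s=\dim E^u$. Because $E^s\oplus E^u=T_{\tilde z}M$ has dimension $2n$, both are $n$-dimensional.

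\textbf{Symplectic saddle.} By the stable/unstable manifold theorem for hyperbolic fixed points \cite[Chapter~6]{katok_hasselblatt}, $W^s(\tilde z)$ and $W^u(\tilde z)$ are smooth, locally embedded manifolds with tangent spaces $E^s$ and $E^u$ at $\tilde z$, which gives (A1). For (A2), a tangent vector to $W^s$ converges to $0\in T_{\tilde z}\{\tilde z\}$ under the tangent flow: the linearized contraction on $E^s$ is exponential, and this extends along $W^s$ by the standard estimates. The unstable manifold is handled by reversing time, since $-H$ also generates a Hamiltonian flow. A point is isotropic, so Theorem \ref{thm:att_man_iso} shows that $W^s$ and $W^u$ are isotropic. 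They have dimension $n=\tfrac12\dim M$, so they are Lagrangian, and $(\{\tilde z\},X_H)$ is a symplectic saddle.

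\textbf{Main obstacle.} The delicate step is matching multiplicities rather than only the set of eigenvalues. Proposition \ref{prop:ham_ews} as stated gives only the symmetry of the spectrum as a set, so I will supplement it with the similarity $A\sim -A^\top$. Verifying (A2) along all of $W^s$, and not just at $\tilde z$, is routine but needs the exponential-dichotomy estimates from the stable manifold theorem.
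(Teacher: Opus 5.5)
Your proof is correct and follows the route the paper intends. The $\lambda\mapsto-\lambda$ symmetry from Proposition \ref{prop:ham_ews} gives $\dim E^s=\dim E^u=n$, and Theorem \ref{thm:att_man_iso} applied to the isotropic point $\{\tilde z\}$ (with the stable manifold theorem supplying Assumption \ref{ass:naim}) makes $W^s$ and $W^u$ Lagrangian. Your similarity $A\sim -A^\top$ also usefully supplies the multiplicity matching, which the paper's statement of Proposition \ref{prop:ham_ews} leaves implicit.
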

	If $\tilde{x}\in Q$ is a fixed point in the control system, it is not automatic that $(\tilde{x},0)\in T^*Q$ is hyperbolic. 
	\begin{proposition}
		Suppose that $\dot{x}=f(x,u)$ has a spectral-stable feedback generated by the cost $\ell$. Then $(\tilde{x}, 0)$ is a hyperbolic fixed point of the induced Hamiltonian system.
	\end{proposition}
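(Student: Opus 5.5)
The plan is to reduce the statement to the classical fact that the Hamiltonian matrix of a linear--quadratic regulator problem has no purely imaginary eigenvalues when the pair $(A,B)$ is stabilizable and the state cost is detectable.

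\textbf{Step 1: linearize.} I would linearize the Hamiltonian system \eqref{eq:hamiltonian_feedback} at $(\tilde{x},0)$. Set
\begin{equation*}
A=\frac{\partial f}{\partial x}(\tilde{x},0),\quad B=\frac{\partial f}{\partial u}(\tilde{x},0),\quad Q_\ell=\frac{\partial^2\ell}{\partial x^2}(\tilde{x},0),\quad R_\ell=\frac{\partial^2\ell}{\partial u^2}(\tilde{x},0).
\end{equation*}
Since $\ell\geq 0$ and $\ell(\tilde{x},0)=0$, the full Hessian of $\ell$ at $(\tilde{x},0)$ is positive semidefinite. I will assume $R_\ell>0$, which is needed for $\mu$ to be smooth near $(\tilde{x},0)$ in the first place. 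For simplicity I will also assume there is no $x$--$u$ cross term; if there is one, a standard change of control variable $u\mapsto u+Kx$ removes it.

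Solving the maximum condition to first order gives $\mu(x,p)\approx R_\ell^{-1}B^\top p$. The quadratic part of $H$ is then
\begin{equation*}
H_2(x,p)=p^\top Ax+\tfrac12 p^\top G p-\tfrac12 x^\top Q_\ell x,\qquad G=BR_\ell^{-1}B^\top\succeq 0 .
\end{equation*}
The linearization is therefore the infinitesimally symplectic matrix
\begin{equation*}
\mathcal{H}=\begin{bmatrix} A & G\\ -Q_\ell & -A^\top\end{bmatrix}.
\end{equation*}
This has exactly the form of the quadratic-Hamiltonian example preceding \eqref{eq:CARE}.

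\textbf{Step 2: rule out imaginary-axis eigenvalues.} By Proposition \ref{prop:ham_ews}, hyperbolicity amounts to showing that $\mathcal{H}$ has no eigenvalue $i\omega$ with $\omega\in\mathbb{R}$; this includes the case $\omega=0$. Suppose $(\xi,\eta)\neq 0$ satisfies
\begin{equation*}
A\xi+G\eta=i\omega\xi,\qquad -Q_\ell\xi-A^\top\eta=i\omega\eta.
\end{equation*}
Multiply the first equation on the left by $\eta^*$ and the conjugate transpose of the second on the right by $\xi$, then subtract. The $A$-terms and the $\omega$-terms cancel, leaving
\begin{equation*}
\eta^*G\eta+\xi^*Q_\ell\xi=0 .
\end{equation*}
Both terms are nonnegative, so each vanishes. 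Since $R_\ell>0$, this gives $B^\top\eta=0$ and $Q_\ell\xi=0$. The eigen-equations then collapse to
\begin{equation*}
A\xi=i\omega\xi,\qquad A^\top\eta=-i\omega\eta,\qquad B^\top\eta=0 .
\end{equation*}

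\textbf{Step 3: use the hypotheses to force $(\xi,\eta)=0$.} The spectrally stabilizing feedback $u(x)$ provides $K=u'(\tilde{x})$ with $A+BK$ Hurwitz, so $(A,B)$ is stabilizable. By the PBH test, no left eigenvector of $A$ for an eigenvalue on the imaginary axis can be annihilated by $B^\top$. This is the same rank information as in Proposition \ref{prop:full_rank}, now applied at $\lambda=-i\omega$. Hence $\eta=0$.

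It remains to show $\xi=0$, which requires that $Q_\ell\xi=0$ and $A\xi=i\omega\xi$ force $\xi=0$, i.e.\ detectability of $(Q_\ell,A)$. This is the step I expect to be the main obstacle, because nothing in the statement as written guarantees it. For example, a zero state cost cannot detect a marginally stable mode, and that mode produces a center direction. I would handle this by making the standing assumption explicit: either $\ell$ is nondegenerate at $(\tilde{x},0)$, meaning $Q_\ell>0$, or at least $(Q_\ell,A)$ is detectable. Under that assumption $\xi=0$, which contradicts $(\xi,\eta)\neq 0$. So $\mathcal{H}$ is hyperbolic, and $(\tilde{x},0)$ is a hyperbolic fixed point.
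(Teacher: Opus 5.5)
Your argument does not prove the proposition as stated. It closes only after you add the hypothesis that $(Q_\ell,A)$ is detectable, and that hypothesis is unnecessary.

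The missing idea is to use that the feedback is \emph{generated by} $\ell$, not merely that some stabilizing gain $K$ exists. In the paper's setting this means $k(x)=\mu(x,\rho(x))$, where the graph $\Lambda$ of $\rho$ is an $X_H$-invariant manifold through $(\tilde{x},0)$. On $\Lambda$ the projected Hamiltonian flow is $\dot{x}=\partial H/\partial p(x,\rho(x))=f(x,k(x))$. Hence $T_{(\tilde{x},0)}\Lambda$ is invariant under $DX_H(\tilde{x},0)$, and $T\pi_Q$ conjugates the restriction to the closed-loop linearization, which is Hurwitz. So $DX_H(\tilde{x},0)$ has $n$ eigenvalues, counted with multiplicity, in the open left half-plane. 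Proposition \ref{prop:ham_ews}, together with evenness of the characteristic polynomial of a Hamiltonian matrix, supplies $n$ in the open right half-plane. These exhaust all $2n$ eigenvalues, so none lies on the imaginary axis.

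This is the paper's proof. Before it, the paper checks that $(\tilde{x},0)$ is a fixed point, using $\mu(\tilde{x},0)=\arg\max_u(-\ell(\tilde{x},u))=0$ and $\partial_x\ell(\tilde{x},0)=0$; you leave this implicit.

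In your own notation the fix is as follows. Let $P=D\rho(\tilde{x})$, which is symmetric since $\Lambda$ is Lagrangian. It solves $PA+A^\top P+PGP-Q_\ell=0$, the generated closed loop is $A+GP$, and $\begin{bmatrix} I&0\\ P&I\end{bmatrix}$ conjugates $\mathcal{H}$ to block-triangular form with diagonal blocks $A+GP$ and $-(A+GP)^\top$. No PBH or detectability argument is needed.

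Your claim that nothing in the statement guarantees the last step is therefore mistaken. A zero state cost such as $\dot{x}=u$, $\ell=\tfrac12u^2$ gives $H=\tfrac12p^2$, whose flow sends no point except the origin to $x=0$. So no spectrally stable feedback is generated, and such examples are excluded by the hypothesis.

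Separately, Step 1 has a sign error that breaks Step 2 as written. From your own $H_2$, $\dot{p}=-\partial H_2/\partial x=Q_\ell x-A^\top p$, so the lower-left block of $\mathcal{H}$ is $+Q_\ell$, not $-Q_\ell$. The paper's quadratic example has $+\tfrac12x^\top Qx$ in $H$, whereas the PMP Hamiltonian carries $-\ell$.

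With $-Q_\ell$, subtracting your two identities does not cancel the $A$- and $\omega$-terms, and adding them gives only $\eta^*G\eta=\xi^*Q_\ell\xi$. Indeed, $A=0$, $G=Q_\ell=1$ gives $\begin{bmatrix}0&1\\-1&0\end{bmatrix}$ with eigenvalues $\pm i$, even though stabilizability and detectability both hold.

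With $+Q_\ell$, adding the identities yields $\eta^*G\eta+\xi^*Q_\ell\xi=0$. Your Steps 2--3 then correctly prove the standard LQR fact that stabilizability plus detectability implies hyperbolicity. That is a true statement but a different one: the proposition's own hypothesis makes the detectability analysis unnecessary.
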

	\begin{proof}
		The control at the fixed point is
		\begin{equation*}
			\begin{split}
				u(\tilde{x}, 0) &= \arg\max_u \, \langle 0, f(\tilde{x},u)\rangle - \ell(\tilde{x}, u) \\
				&= \arg\max_u - \ell(\tilde{x}, u).
			\end{split}
		\end{equation*}
		As $\ell$ is positive-definite in $u$, it follows that $u(\tilde{x}, 0)=0$. Therefore $(\tilde{x},0)$ is a fixed point of $X_H$. As the tangent map has $n$ eigenvalues with negative real-part (via spectral-stability), there exist $n$ eigenvalues with positive real-part by Proposition \ref{prop:ham_ews}. Therefore, it is a hyperbolic fixed point.
	\end{proof}
	
	Let $(\tilde{x},0)$ be a hyperbolic fixed point of a Hamiltonian system on $T^*Q$. As $W^s(\tilde{x},0)$ is Lagrangian, there exists a generating function (in a neighborhood of $\tilde{x}$) as long as $\tilde{x}\not\in \Sigma\left( W^s(\tilde{x},0)\right)$.
	This can be tested by the stable manifold theorem (e.g., \cite[Theorem~3.2.1]{wiggins}) as the tangent space is explicitly known and $T_{(\tilde{x},0)}W^s(\tilde{x},0) = E^s$.
	This leads immediately to the following (which can be viewed as the special case of Theorem \ref{thm:main_result} applied to fixed points).
	\begin{theorem}\label{thm:generating_fixed}
		Let $(\tilde{x}, 0)$ be a hyperbolic fixed point to a Hamiltonian system on $T^*Q$. Let $v^s_1, \ldots, v^s_n \subset T_{(\tilde{x}, 0)}T^*Q$ be $n$ linearly independent stable eigenvectors for $(\tilde{x}, 0)$. If the collection
		\begin{equation*}
			\left\{ T\pi_Q(v_1^s), \ldots, T\pi_MQ(v_n^s) \right\} \subset T_{\tilde{x}}Q,
		\end{equation*}
		is linearly independent, 	
		then there (locally) exists a function $S:Q\to\mathbb{R}$ such that
		\begin{equation*}
			W^s(\tilde{x}, 0) = \Gamma_{dS}.
		\end{equation*}
	\end{theorem}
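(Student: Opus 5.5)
The plan is to combine the stable manifold theorem with Theorem \ref{thm:att_man_iso} and the generating-function corollary from Section \ref{sec:symplectic}.

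\textbf{Step 1: the local stable manifold.} Since $(\tilde{x},0)$ is hyperbolic, the stable manifold theorem gives a local stable manifold $W^s_{\mathrm{loc}}(\tilde{x},0)$. It is an embedded $C^\infty$ submanifold of dimension $n$. Its tangent space at $(\tilde{x},0)$ is $E^s=\mathrm{span}\{v_1^s,\ldots,v_n^s\}$. Here $\dim E^s = n$ by the corollary following Proposition \ref{prop:ham_ews}.

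\textbf{Step 2: it is Lagrangian.} A hyperbolic fixed point satisfies Assumption \ref{ass:naim}:
\begin{itemize}
\item[(A1)] holds by the stable manifold theorem;
\item[(A2)] holds because tangent vectors to $W^s$ are contracted exponentially, so $(\varphi_t)_*v\to 0\in T N$.
\end{itemize}
A point is trivially isotropic. Theorem \ref{thm:att_man_iso} therefore shows that $W^s(\tilde{x},0)$ is isotropic. Since its dimension is $n=\tfrac12\dim T^*Q$, it is Lagrangian.

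\textbf{Step 3: it is locally a graph.} By hypothesis, $T\pi_Q$ restricted to $T_{(\tilde{x},0)}W^s$ sends a basis of $E^s$ to a basis of $T_{\tilde{x}}Q$, so this restriction is a linear isomorphism. The inverse function theorem then makes $\pi_Q|_{W^s_{\mathrm{loc}}}$ a local diffeomorphism near $(\tilde{x},0)$. Shrinking if necessary, $W^s_{\mathrm{loc}}$ is the graph of a smooth one-form $\alpha$ on a neighborhood $U$ of $\tilde{x}$. Equivalently, $W^s_{\mathrm{loc}}$ is transverse to the fibers, i.e. $\tilde{x}\notin\Sigma(W^s)$.

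\textbf{Step 4: the generating function.} By Proposition \ref{prop:closed_Lagrangian}(3), the graph $\Gamma_\alpha$ is Lagrangian exactly when $\alpha$ is closed, so $d\alpha=0$. Taking $U$ to be a ball (contractible), the Poincaré lemma gives $S:U\to\mathbb{R}$ with $\alpha=dS$, hence $W^s(\tilde{x},0)=\Gamma_{dS}$ locally. This is exactly the generating-function corollary applied to $W^s$.

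The only delicate point is Step 2, namely verifying Assumption \ref{ass:naim} for hyperbolic points. There, (A2) comes from the exponential estimate $|(\varphi_t)_*v|\le Ce^{-\lambda t}|v|$ on $TW^s_{\mathrm{loc}}$. The remaining steps are routine.
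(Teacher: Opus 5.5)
Your proposal is correct and follows essentially the same route as the paper. The paper argues, in the paragraph before the theorem rather than in a formal proof, that $W^s(\tilde{x},0)$ is Lagrangian because hyperbolic fixed points are symplectic saddles; that the stable manifold theorem gives $T_{(\tilde{x},0)}W^s(\tilde{x},0)=E^s$, so the hypothesis keeps $\tilde{x}$ off the caustic $\Sigma\left(W^s(\tilde{x},0)\right)$; and that a local generating function therefore exists. Your Steps 3 and 4 (inverse function theorem, closedness of $\alpha$ via Proposition~\ref{prop:closed_Lagrangian}, and the Poincar\'{e} lemma on a ball) spell out the generating-function corollary that the paper invokes directly.
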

	The above theorem can be expressed in local coordinates on $T^*Q$. The linearization of the Hamiltonian vector field \eqref{eq:local_hamilton_eq} at a fixed point is
	\begin{equation*}
		Z = \left[\begin{array}{cc}
			\dfrac{\partial^2 H}{\partial x\partial p} & \dfrac{\partial^2H}{\partial p^2} \\[2ex]
			-\dfrac{\partial^2H}{\partial x^2} & -\dfrac{\partial^2H}{\partial x \partial p}
		\end{array}\right].
	\end{equation*}
	Suppose that this matrix is hyperbolic with stable eigenvectors $v_1^s,\ldots, v_n^s$. Construct the $2n\times n$ matrix
	\begin{equation}\label{eq:stable_directions}
		S = \begin{bmatrix} U \\ V \end{bmatrix} = \left[ \begin{array}{c|c|c}
			v_1^s & \cdots & v_n^s
		\end{array}\right].
	\end{equation} 
	Then, Theorem \ref{thm:generating_fixed} states that there locally exists a generating function if the $n\times n$ matrix $U$ is invertible.
	Results from infinite-horizon LQR provide sufficient conditions for the matrix $U$ to be invertible, cf. Theorem 2 in \cite{LQR_existence} and Theorem 6.1 in \cite{liberzon_oc}.
	\begin{theorem}\label{thm:LQR_U}
		Consider the matrix
		\begin{equation*}
			Z = \begin{bmatrix} A & BR^{-1}B^\top \\
				C^\top C & -A^\top \end{bmatrix},
		\end{equation*}
		where $R=R^\top >0$ is symmetric and positive-definite, $(A,B)$ is a controllable pair, and $(A,C)$ is an observable pair. Then the matrix $U$ in \eqref{eq:stable_directions} is invertible.
	\end{theorem}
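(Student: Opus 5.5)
The plan is the classical LQR argument in two stages: first show that $Z$ is hyperbolic, so that $E^s$ has dimension $n$, and then show that $E^s$ cannot contain a nonzero vector of the form $(0,p)$. Write $G = BR^{-1}B^\top$, which is symmetric positive semi-definite. Then $Z$ is the linearization of the Hamiltonian
\begin{equation*}
H(x,p) = p^\top A x + \tfrac12 p^\top G p - \tfrac12 x^\top C^\top C x,
\end{equation*}
so $Z$ is infinitesimally symplectic and Proposition \ref{prop:ham_ews} applies.

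\emph{Step 1 (hyperbolicity).} Suppose $Z(x,p)=\lambda(x,p)$ with $\lambda\in i\mathbb{R}$ and $(x,p)\in\mathbb{C}^{2n}$. This means $Ax+Gp=\lambda x$ and $C^\top Cx - A^\top p = \lambda p$.
\begin{itemize}
\item Multiply the first equation on the left by $p^*$.
\item Multiply the second equation on the left by $x^*$ and conjugate it, using $\bar\lambda=-\lambda$.
\item Adding the two results, the $p^*Ax$ terms cancel and we obtain $p^*Gp + |Cx|^2 = 0$.
\end{itemize}
Since $R>0$ and $G\ge 0$, this gives $B^\top p=0$ and $Cx=0$. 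Substituting back, $Ax=\lambda x$ with $Cx=0$, so $x=0$ by the PBH form of observability. Likewise $A^\top p=-\lambda p$ with $B^\top p=0$, so $p=0$ by the PBH form of controllability. Hence $Z$ has no imaginary-axis eigenvalues. By Proposition \ref{prop:ham_ews}, exactly $n$ eigenvalues lie in the open left half-plane, so $\dim E^s = n$.

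\emph{Step 2 ($E^s$ is Lagrangian).} Choose a real basis of $E^s$ and assemble it into $S=\begin{bmatrix}U\\V\end{bmatrix}$ as in \eqref{eq:stable_directions}. Invariance of $E^s$ gives $ZS=S\Lambda$ for a real $n\times n$ matrix $\Lambda$ whose spectrum lies in $\{\mathrm{Re}\,\lambda<0\}$. Applying Theorem \ref{thm:att_man_iso} to the linear flow $e^{tZ}$, with $N$ the origin, shows that $E^s$ is isotropic. In coordinates this reads $U^\top V = V^\top U$.

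\emph{Step 3 (invertibility of $U$).} Written out, $ZS=S\Lambda$ becomes
\begin{equation*}
AU+GV = U\Lambda, \qquad C^\top C U - A^\top V = V\Lambda .
\end{equation*}
Take $\xi\in\ker U$, allowing complex $\xi$.
\begin{itemize}
\item Left-multiplying the first equation by $\xi^*V^\top$ and applying it to $\xi$ gives $\xi^*V^\top G V\xi = \xi^* V^\top U\Lambda\xi$.
\item By the symmetry from Step 2, $\xi^*V^\top U\Lambda\xi = (U\xi)^*V\Lambda\xi = 0$.
\item Hence $B^\top V\xi=0$, so $GV\xi=0$.
\item The first equation then yields $U\Lambda\xi=0$, so $\ker U$ is $\Lambda$-invariant.
\end{itemize}
If $\ker U\neq 0$, pick an eigenvector $\xi\in\ker U$ with $\Lambda\xi=\lambda\xi$ and $\mathrm{Re}\,\lambda<0$. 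The second equation, together with $U\xi=0$, gives $A^\top V\xi = -\lambda V\xi$, and we already have $B^\top V\xi=0$. Controllability (PBH) then forces $V\xi=0$. Together with $U\xi=0$ this gives $S\xi=0$, contradicting that $S$ has full column rank. Therefore $U$ is invertible.

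The main obstacle is Step 3, specifically showing that $\ker U$ is $\Lambda$-invariant. That is exactly where the Lagrangian property from Step 2 is needed to kill the cross term $\xi^*V^\top U\Lambda\xi$. Some care is also needed with complex versus real bases: the symmetry $U^\top V=V^\top U$ is stated for a real basis, and it is then applied to a complex vector $\xi$ via sesquilinear pairing.
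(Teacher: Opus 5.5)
Your proof is correct, and it takes a different route from the paper. The paper gives no argument of its own for this statement: it defers to the infinite-horizon LQR results it cites. There the route goes through optimal control. One produces the stabilizing solution $P$ of the algebraic Riccati equation \eqref{eq:CARE}, typically as a limit of the finite-horizon Riccati flow, with a Lyapunov argument for closed-loop stability. Then $E^s$ is identified with the graph $\{(x,Px)\}$, which forces $U$ to be invertible. That route delivers more than the statement asks (definiteness of $P$, optimality of the feedback, the quadratic value function as generating function), which fits the paper's narrative. Your route instead stays inside the linear symplectic framework the paper has built: PBH tests give hyperbolicity, Theorem \ref{thm:att_man_iso} applied to $e^{tZ}$ gives $U^\top V=V^\top U$, and the $\Lambda$-invariance of $\ker U$ finishes the argument. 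This is essentially the classical Hamiltonian-matrix argument from robust-control texts. What it buys is self-containment and a precise accounting of the hypotheses. In Step 1 you only use the absence of uncontrollable and unobservable modes on the imaginary axis. In Step 3 the relevant eigenvalue $-\lambda$ of $A^\top$ has positive real part, so stabilizability of $(A,B)$ suffices. Hence your argument proves the conclusion under stabilizability and detectability, and it gives $P=VU^{-1}$ symmetric for free. Two small points deserve an explicit sentence. First, invertibility of $U$ does not depend on the basis of $E^s$, since changing basis replaces $S$ by $ST$ with $T$ invertible; so your real basis is a legitimate stand-in for the possibly complex eigenvectors in \eqref{eq:stable_directions}. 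Second, your count of exactly $n$ stable eigenvalues needs $\lambda$ and $-\lambda$ to have equal algebraic multiplicities, i.e.\ that the characteristic polynomial of a Hamiltonian matrix is even. That is slightly more than Proposition \ref{prop:ham_ews} literally states, though it is what the subsequent corollary uses.
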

	
	Mechanical systems provide an explicit class of examples that satisfy the above conditions.
	\begin{theorem}
		Let $H:T^*Q\to \mathbb{R}$ be a mechanical Hamiltonian, i.e.
		\begin{equation*}
			H(x,p) = \frac{1}{2}g^{ij}(x)p_ip_j + V(x),
		\end{equation*}
		such that $\tilde{x}$ is a non-degenerate maximum of $V$. Then $(\tilde{x},0)$ is a hyperbolic fixed point and $W^s(\tilde{x},0)$ is Lagrangian with a local generating function.
	\end{theorem}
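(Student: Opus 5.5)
Since $V$ has a non-degenerate maximum at $\tilde{x}$, the gradient $dV(\tilde{x})$ vanishes, so $(\tilde{x},0)$ is a fixed point of $X_H$: Hamilton's equations \eqref{eq:local_hamilton_eq} give $\dot{x}^i = g^{ij}p_j$ and $\dot{p}_i = -\tfrac12 \partial_i g^{jk}p_jp_k - \partial_i V$, both of which vanish there.

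Next we compute the linearization $Z$ at $(\tilde{x},0)$. Every term with $\partial^2 H/\partial x\partial p$ carries a factor of $p$ and so vanishes. The remaining blocks are $\partial^2H/\partial p^2 = G^{-1}$, where $G^{-1}=(g^{ij}(\tilde{x}))$ is positive definite, and $\partial^2 H/\partial x^2 = \mathrm{Hess}\,V(\tilde{x}) =: -K$ with $K$ positive definite. Hence
\begin{equation*}
Z = \begin{bmatrix} 0 & G^{-1} \\ K & 0 \end{bmatrix}.
\end{equation*}

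To get the spectrum, note that $Z^2 = \mathrm{diag}(G^{-1}K, KG^{-1})$. The matrix $G^{-1}K$ is similar to the symmetric positive definite matrix $G^{-1/2}KG^{-1/2}$, so its eigenvalues are $\kappa_j>0$. The eigenvalues of $Z$ are therefore $\pm\sqrt{\kappa_j}$, all real and nonzero, and $(\tilde{x},0)$ is hyperbolic. In the notation of Proposition \ref{prop:ham_ews}, there are $n$ stable and $n$ unstable directions.

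We find the stable eigenvectors explicitly. Let $w_j$ be an eigenvector with $G^{-1}K w_j = \kappa_j w_j$, where the $w_j$ form a basis because $G^{-1}K$ is diagonalizable. Set $\lambda_j=-\sqrt{\kappa_j}$ and $v_j^s = (w_j, \lambda_j G w_j)$. Then
\begin{equation*}
Z v_j^s = \bigl(\lambda_j w_j,\; K w_j\bigr) = \bigl(\lambda_j w_j,\; \lambda_j^2 G w_j\bigr) = \lambda_j v_j^s,
\end{equation*}
using $K w_j = \kappa_j G w_j = \lambda_j^2 G w_j$. The $x$-components of the $v_j^s$ are the $w_j$, so the matrix $U$ in \eqref{eq:stable_directions} is invertible. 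Equivalently, $T\pi_Q$ maps the stable eigenvectors to a basis of $T_{\tilde{x}}Q$.

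The conclusion now follows from earlier results. By the Corollary to Proposition \ref{prop:ham_ews}, a hyperbolic fixed point is a symplectic saddle, so $W^s(\tilde{x},0)$ is Lagrangian. This also follows from Theorem \ref{thm:att_man_iso} together with the stable manifold theorem and a dimension count. Theorem \ref{thm:generating_fixed} then gives a local generating function $S$ with $W^s(\tilde{x},0)=\Gamma_{dS}$.

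No step is a genuine obstacle. The only delicate point is confirming that the mixed partial derivatives vanish at $p=0$ and that the eigen-decomposition is complete, which holds because $G^{-1}K$ is self-adjoint with respect to the inner product defined by $G$.
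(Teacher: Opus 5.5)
Your proof is correct, but you establish the key fact, the invertibility of $U$, by a different route than the paper. The paper computes the same block form $Z=\begin{bmatrix}0&N\\Q&0\end{bmatrix}$ and then factors $N=BB^\top$ and $Q=C^\top C$ with $B,C$ invertible. It notes that $(0,B)$ is controllable and $(0,C)$ is observable, and invokes the LQR result (Theorem \ref{thm:LQR_U}) with $A=0$ and $R=I$. Hyperbolicity of $(\tilde{x},0)$ is never checked separately there; it is implicitly taken from the LQR theory behind that theorem.

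You do everything by hand instead. The identity $Z^2=\mathrm{diag}(G^{-1}K,KG^{-1})$ forces the spectrum to be $\pm\sqrt{\kappa_j}$. The explicit eigenvectors $(w_j,-\sqrt{\kappa_j}\,Gw_j)$ then make the invertibility of $U$ visible by inspection.

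Your argument buys three things. It is self-contained and elementary. It proves hyperbolicity explicitly. It also identifies $E^s$ as the graph of the symmetric matrix $-G^{1/2}(G^{-1/2}KG^{-1/2})^{1/2}G^{1/2}$, i.e.\ the Hessian at $\tilde{x}$ of the generating function $S$.

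The paper's reduction buys generality. It places the mechanical case inside a controllability/observability criterion that still applies when the diagonal blocks $\partial^2H/\partial x\partial p$ of $Z$ are nonzero. In that setting your squaring trick no longer block-diagonalizes $Z^2$.
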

	\begin{proof}
		The tangent of the Hamiltonian vector field at $(x^*,0)$ is
		\begin{equation*}
			Z = \left. \left[ \begin{array}{c:c}
				\dfrac{\partial g}{\partial x^i}p_j & \left(g^{ij}\right) \\[1ex] \hdashline \noalign{\vskip 2pt}
				-\dfrac{\partial^2 V}{\partial x^i\partial x^j} & -\dfrac{\partial g}{\partial x^j}p_i
			\end{array}\right] \right|_{x=x^*, p=0} =: \left[ \begin{array}{c:c}
				0 & N \\ \hdashline
				Q & 0
			\end{array}\right],
		\end{equation*}
		where each block is an $n\times n$ matrix. As both $N$ and $Q$ are positive-definite, there exists invertible matrices $B$ and $C$ such that
		\begin{equation*}
			N = BB^\top, \quad Q = C^\top C.
		\end{equation*}
		The result follows from Theorem \ref{thm:LQR_U}.
	\end{proof}
	Theorem \ref{thm:generating_fixed} also allows for a \textit{dynamical} interpretation of Brockett's condition.
	\begin{proposition}
		Suppose that Brockett's condition fails for the control system $\dot{x}=f(x,u)$. If $(\tilde{x},0)$ is a fixed point of the induced Hamiltonian system, then the fixed point lies on a caustic, i.e. $\tilde{x} \in \Sigma \left( W^s(\tilde{x},0) \right)$.
	\end{proposition}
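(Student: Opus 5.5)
We argue by contraposition. Suppose $\tilde{x}\notin \Sigma\left(W^s(\tilde{x},0)\right)$. Then $\pi_Q$ restricted to $W^s(\tilde{x},0)$ is a local diffeomorphism near $(\tilde{x},0)$. By the corollary characterizing graphs of closed one-forms, together with the fact that $W^s(\tilde{x},0)$ is isotropic (Theorem \ref{thm:att_man_iso}), the stable manifold is locally $\Gamma_{dS}$ for some generating function $S$. Equivalently, $W^s(\tilde{x},0)=\{(x,\rho(x))\}$ with $\rho=dS$ smooth near $\tilde{x}$ and $\rho(\tilde{x})=0$.

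The plan is to manufacture a continuous stabilizing feedback from $\rho$, which contradicts Brockett's condition. Set $k(x)=\mu(x,\rho(x))$. This is continuous provided $\mu$ is continuous, which we take as part of the standing regularity (as in Theorem \ref{thm:main_result}). It satisfies $k(\tilde{x})=\mu(\tilde{x},0)=0$ by the same argmax computation used in the proof that spectral stability yields hyperbolicity.

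Next we show that $k$ stabilizes $\tilde{x}$. Hamilton's equations give $\dot{x}=\partial H/\partial p = f(x,\mu(x,p))$ by the envelope/maximum condition. Since $W^s(\tilde{x},0)$ is invariant, the closed-loop trajectory $\dot{x}=f(x,k(x))$ starting at $x_0$ is exactly the projection of the Hamiltonian trajectory starting at $(x_0,\rho(x_0))$. That Hamiltonian trajectory converges to $(\tilde{x},0)$ by definition of the stable set, so the closed-loop trajectory converges to $\tilde{x}$; this is the graph-to-feedback direction of \eqref{eq:insight}.

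Finally, Brockett's condition gives the contradiction: a locally asymptotically stabilizing continuous feedback forces $f$ to map every neighborhood of $(\tilde{x},0)$ onto a neighborhood of $0$, which was assumed to fail. Hence $\tilde{x}\in\Sigma\left(W^s(\tilde{x},0)\right)$.

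The main obstacle is upgrading convergence to \emph{asymptotic stability}, which requires Lyapunov stability and not just attractivity. The first approach is to use the remark's Lyapunov candidate: take $V=-S$ up to sign, normalized with $V(\tilde{x})=0$. Along the closed loop it decreases since $\langle \rho, f\rangle = \ell\ge 0$ on $H^{-1}(0)$. This requires knowing that $W^s\subset H^{-1}(0)$ (energy conservation plus $H(\tilde{x},0)=0$) and that $V$ is positive definite near $\tilde{x}$. The second point is delicate: if it fails, the fallback is to use the hyperbolic (or Assumption \ref{ass:naim}) local structure, giving uniform contraction on a neighborhood of $(\tilde{x},0)$ in $W^s$. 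Stability then transfers through the local diffeomorphism $\pi_Q$.

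A secondary subtlety is that without hyperbolicity the stable set might not contain a full neighborhood of $\tilde{x}$ in its projection. This is handled by working with the non-caustic assumption: a local diffeomorphism onto $Q$ covers a neighborhood, since $\dim W^s = n$ in the Lagrangian (symplectic saddle) case.
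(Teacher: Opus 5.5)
Your contrapositive argument is correct and follows the reasoning the paper intends. The paper states this proposition without proof, as an immediate consequence of Theorem \ref{thm:generating_fixed} and the graph-to-feedback correspondence \eqref{eq:insight} combined with Brockett's theorem. The stability step, which the paper leaves implicit, is handled by your fallback, and it can be stated outright rather than as a hedge: being off the caustic forces $\dim W^s(\tilde{x},0)=n$, (A2) then makes $(\tilde{x},0)$ hyperbolic with $T_{(\tilde{x},0)}W^s=E^s$, and $d\pi_Q$ conjugates the closed-loop linearization at $\tilde{x}$ to $Z|_{E^s}$. So the feedback is spectrally stabilizing, and the Lyapunov-function route is unnecessary.
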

	\begin{example}[The nonlinear pendulum]\label{ex:pendulum}
		Let $Q = \mathbb{S}^1$ be the unit circle and $M = T^*\mathbb{S}^1$ be its cotangent bundle with local coordinates $(\theta, p)$ and symplectic form $\omega = d\theta\wedge dp$. Define the Hamiltonian function
		\begin{equation}\label{eq:pendulum_hamiltonian}
			H:T^*\mathbb{S}^1\to\mathbb{R}, \quad (\theta,p) \mapsto \frac{1}{2}p^2 + \cos\theta.
		\end{equation}
		The resulting Hamiltonian vector field is
		\begin{equation*}
			X_H = p\frac{\partial}{\partial \theta} + \sin\theta\frac{\partial}{\partial p}.
		\end{equation*}
		The origin is a hyperbolic fixed point with eigenvalues $\lambda = \pm 1$. The stable/unstable manifolds are Lagrangian submanfolds:
		\begin{equation*}
			\begin{split}
				W^s(0,0) &= \left\{ (\theta, \sqrt{2-2\cos\theta}) : -\pi<\theta\leq 0 \right\} \cup \left\{ (\theta, -\sqrt{2-2\cos\theta}) : 0\leq\theta\leq \pi \right\},\\
				W^u(0,0) &= \left\{ (\theta, -\sqrt{2-2\cos\theta}) : -\pi<\theta\leq 0 \right\} \cup \left\{ (\theta, \sqrt{2-2\cos\theta}) : 0\leq\theta\leq \pi \right\},
			\end{split}
		\end{equation*}
		see Figure \ref{fig:pendulum_manifold}.
		As $W^s$ is a Lagrangian manifold, by Proposition \ref{prop:closed_Lagrangian}, there exists (locally) a generating function. This can be calculated and is
		\begin{equation}\label{eq:gen_fcn_pendulum}
			S(\theta) = \begin{cases}
				4 & \theta =0, \\
				-2\sqrt{2-2\cos\theta}\cot\dfrac{\theta}{2}, & -2\pi<\theta<0, \\
				2\sqrt{2-2\cos\theta}\cot\dfrac{\theta}{2}, & 0<\theta < 2\pi.
			\end{cases}
		\end{equation}
		Notice that \eqref{eq:gen_fcn_pendulum} is a solution to the Hamilton-Jacobi equation
		corresponding to \eqref{eq:pendulum_hamiltonian}:
		\begin{equation*}
			\frac{1}{2}\left( \frac{\partial S}{\partial \theta} \right)^2 + \cos\theta = 1.
		\end{equation*}
		\begin{figure}
			\centering
			\begin{tikzpicture}[xscale=0.75]
				\draw[->, ultra thick] (-7.5, 0) -- (7.5, 0);
				\draw[->, ultra thick] (0, -2.5) -- (0, 2.5);
				\node[below right] at (7.5, 0) {$\theta$};
				\node[above left] at (0, 2.5) {$p$};
				\foreach \x in {-7,-6.5,...,7}{
					\foreach \y in {-1.5,-1,...,1.5}{
						\pgfmathsetmacro{\vx}{\y}
						\pgfmathsetmacro{\vy}{sin(deg(\x))}
						\draw[->] (\x,\y) -- (\x+\vx/3,\y+\vy/3);
					}
				}
				\draw[ultra thick, blue, domain=-2*pi:0, smooth] plot (\x, {sqrt(2-2*cos(deg(\x)))});
				\draw[ultra thick, blue, domain=0:2*pi, smooth] plot (\x, {-sqrt(2-2*cos(deg(\x)))});
				\draw[ultra thick, magenta, domain=-2*pi:0, smooth] plot (\x, {-sqrt(2-2*cos(deg(\x)))});
				\draw[ultra thick, magenta, domain=0:2*pi, smooth] plot (\x, {sqrt(2-2*cos(deg(\x)))});
				\node[above right, magenta] at (pi,2) {$W^u(0,0)$};
				\node[below right, blue] at (pi,-2) {$W^s(0,0)$};
			\end{tikzpicture}
			\caption{The stable and unstable manifolds for the hyperbolic fixed point in the nonlinear pendulum from Example \ref{ex:pendulum}.}
			\label{fig:pendulum_manifold}
		\end{figure}
	\end{example}
	\section{Examples}\label{sec:examples}
	We provide numerous examples demonstrating the application and failures of Theorems \ref{thm:main_result} and \ref{thm:generating_fixed}. The first two examples, \S\ref{sec:uncontrol} and \S\ref{sec:local_stable}, are purely mathematical and are chosen such that the induced stable manifold has desired properties. In particular, these examples are not strictly optimization problems as the running cost is not positive-definite. The next example, \S\ref{sec:nh_integrator}, is Brockett's nonholonomic integrator which is known to not admit a continuous feedback law stabilizing the origin. The last example, \S\ref{sec:periodic_orbit}, stabilizes a periodic orbit in a planar system. All numerical results for the last two examples used the DifferentialEquations.jl Julia package \cite{rackauckas2017differentialequations}.
	\subsection{Uncontrollable system}\label{sec:uncontrol}
	If a control system is uncontrollable, stabilizing feedback laws obviously cannot exist. This should be apparent from its stable manifold. 
	Consider the control system and cost
	\begin{equation*}
		\dot{x} = x - u^2, \quad \ell = x^2 + u^3.
	\end{equation*}
	Applying the maximum principle yields the Hamiltonian
	\begin{equation}\label{eq:hamiltonian_quadratic}
		H = -\frac{4}{27}p^3 + px - x^2,
	\end{equation}
	with the control law $u = -2/3p$. The origin ends up being a hyperbolic homoclinic point, see Fig. \ref{fig:quadratic_loop}. 
	
	The caustic set is $\Sigma(W^s) = \{0,1\}$ which contains the fixed point obstructing smooth feedback stabilization. Moreover, the stable manifold is only in the first and fourth quadrants signifying that the underlying control system is uncontrollable.
	\begin{figure}
		\centering
		\begin{tikzpicture}
			\draw[->, thick] (-4, 0) -- (4, 0);
			\draw[->, thick] (0,-3.5) -- (0,3.5);
			\node[below right] at (4, 0) {$x$};
			\node[above left] at (0, 3.5) {$p$};
			\foreach \x in {-3,-2,...,3}{
				\foreach \y in {-3,-2,...,3}{
					\pgfmathsetmacro{\vx}{\x-4/9*\y*\y}
					\pgfmathsetmacro{\vy}{2*\x-\y}
					\draw[->] (\x,\y) -- (\x+\vx/10,\y+\vy/10);
				}
			}
			\draw[thick, red] (1,3/2) -- (1,0);
			\draw[red, fill] (1,0) circle [radius=0.1];
			\draw[ultra thick, magenta, domain=-0.35:1.0, smooth] plot ({6.75*\x*\x*(1-\x)},{6.75*\x*(1-\x)});
			\draw[ultra thick, blue, domain=0.0:1.0, smooth, dashed] plot ({6.75*\x*\x*(1-\x)},{6.75*\x*(1-\x)});
			\draw[ultra thick, blue, domain=1.0:1.35, smooth] plot ({6.75*\x*\x*(1-\x)},{6.75*\x*(1-\x)});
			\draw[red, fill] (0,0) circle [radius=0.1];
			\node[magenta, below right] at (1.2, -3.2) {$W^s(0)$};
			\node[blue, above left] at (-3.6,-2.9) {$W^u(0)$};
			\node[red, fill=white] at (1,-0.5) {Caustics};
		\end{tikzpicture}
		\caption{The stable manifolds for the origin in the Hamiltonian system \eqref{eq:hamiltonian_quadratic}. As the origin is a homoclinic point, the transition between the stable and unstable manifold in the first quadrant is arbitrary as they coincide there.}
		\label{fig:quadratic_loop}
	\end{figure}
	\subsection{Locally stabilizable system}\label{sec:local_stable}
	It can be the case that a control system admits a local feedback law while a global feedback is impossible. This can be seen from the stable manifold having ``hysteresis.''
	
	Consider the control system and cost
	\begin{equation}\label{eq:cubic_hysteresis_dynamics}
		\dot{x} = x - 4u^3 + 6u, \quad \ell = 3u^2(1-u^2).
	\end{equation}
	Applying the maximum principle yields the Hamiltonian
	\begin{equation*}
		H = px - p^4 + 3p^2,
	\end{equation*}
	with the control law $u = p$. The origin is a hyperbolic fixed point with corresponding manifolds:
	\begin{equation*}
		\Lambda = W^s(0,0) = \left\{ x = p^3-3p \right\}, \quad W^u(0,0) = \left\{ p=0 \right\},
	\end{equation*}
	see Fig. \ref{fig:cubic_hysteresis}.
	
	When $-2<x<2$, the projection $\pi:\Lambda\to \mathbb{R}$ is 3-to-1; each choice of $x$ in that interval yields three possible control laws. A global continuous section is impossible. This yields the option:
	\begin{itemize}
		\item A continuous feedback law (given in blue in Fig. \ref{fig:cubic_hysteresis_decomposition}). However, this law cannot be extended beyond the interval $[-2,2]$. This is because the caustic set is $\Sigma(\Lambda) = \{-2,2\}$.
		\item A discontinuous feedback law (given in magenta in Fig. \ref{fig:cubic_hysteresis_decomposition}) which is defined for on the entire line $\mathbb{R}$. In this case, the resulting system is Filippov and the origin is reached in finite time.
	\end{itemize}
	\begin{figure}
		\centering
		\begin{tikzpicture}
			\draw[->, thick] (-4, 0) -- (4, 0);
			\draw[->, thick] (0, -2) -- (0, 2);
			\node[below right] at (4, 0) {$x$};
			\node[above left] at (0, 2) {$p$};
			\foreach \x in {-3.5,-3,...,3.5}{
				\foreach \y in {-1.9,-1.5,...,1.9}{
					\pgfmathsetmacro{\vx}{\x-4*\y*\y*\y+6*\y}
					\pgfmathsetmacro{\vy}{-\y}
					\draw[->] (\x,\y) -- (\x+\vx/10,\y+\vy/10);
				}
			}
			\draw[ultra thick, blue] (-4,0) -- (4,0);
			\draw[ultra thick, magenta, domain=-2.15:2.15, smooth] plot (\x*\x*\x-3*\x, \x);
			\node[below right, magenta] at (3.5,2.15) {$W^s(0) = \Lambda$};
			\node[left, blue] at (-4,0) {$W^u(0)$};
		\end{tikzpicture}
		\caption{The stabilizing Lagrangian manifold (magenta) and unstable manifold (blue).}
		\label{fig:cubic_hysteresis}
	\end{figure}
	\begin{figure}
		\centering
		\begin{tikzpicture}
			\draw[->, thick] (-4, 0) -- (4, 0);
			\draw[->, thick] (0, -2) -- (0, 2);
			\node[below right] at (4, 0) {$x$};
			\node[above left] at (0, 2) {$p$};
			\draw[ultra thick, magenta, domain=1.7321:2.15, smooth] plot (\x*\x*\x-3*\x, \x);
			\draw[ultra thick, magenta, domain=-2.15:-1.7321, smooth] plot (\x*\x*\x-3*\x, \x);
			\draw[red, thick] (-2,1) -- (-2,0);
			\draw[red, thick] (2,-1) -- (2,0);
			\draw[red, fill] (-2,0) circle [radius=0.1];
			\draw[red, fill] (2,0) circle [radius=0.1];
			\draw[ultra thick, blue, domain=-1:1, smooth] plot (\x*\x*\x-3*\x, \x);
			\draw[ultra thick, dashed, domain=1:1.7321, smooth] plot (\x*\x*\x-3*\x, \x);
			\draw[ultra thick, dashed, domain=-1.7321:-1, smooth] plot (\x*\x*\x-3*\x, \x);
			\node[magenta, below] at (3.5,1.9) {Discontinuous (global) Feedback};
			\node[blue, below left] at (0,0) {Continuous (local) Feedback};
			\node[red] at (2, 0.5) {Caustics};
		\end{tikzpicture}
		\caption{Using the cost to synthesize a stabilizing feedback law for the dynamics \eqref{eq:cubic_hysteresis_dynamics} yields the law given above in magenta. The feedback in blue is continuous, but cannot be extended to a global feedback law.}
		\label{fig:cubic_hysteresis_decomposition}
	\end{figure}
	\subsection{The nonholonomic integrator}\label{sec:nh_integrator}
	Consider the nonholonomic integrator from Example \ref{ex:nh_integrator},
	\begin{equation*}
		\dot{x} = u, \quad \dot{y} = v, \quad \dot{z} = uy - vx,
	\end{equation*}
	along with the running cost
	\begin{equation*}
		\ell =  \frac{1}{2}(x^2+y^2+z^2+u^2+v^2).
	\end{equation*}
	The induced Hamiltonian is
	\begin{equation*}
		H = \frac{1}{2}p_x^2 + yp_xp_y + \frac{1}{2}p_y^2 - xp_yp_z + \frac{1}{2}(x^2+y^2)p_z^2 - \frac{1}{2}\left(x^2+y^2+z^2\right).
	\end{equation*}
	As no continuous feedback law exists stabilizing the origin, the stable manifold cannot project homeomorphically around the origin. This problem can be simplified as $H$ is invariant under rotations of the $xy$-plane resulting in the conserved quantity $P=yp_x-xp_y$. Applying the coordinate transform 
	\begin{equation*}
		x = r\cos\theta, \quad y = r\sin\theta,
	\end{equation*}
	and applying the constraint $P=0$, we obtain the reduced Hamiltonian
	\begin{equation}\label{eq:reduced_nh_hamiltonian}
		H_{red} = \frac{1}{2}r^2 p_z^2 + \frac{1}{2}p_r^2 - \frac{1}{2}\left(z^2 + r^2\right).
	\end{equation}
	\begin{remark}
		The reduced Hamiltonian \eqref{eq:reduced_nh_hamiltonian} is equivalent to the Hamiltonian induced by the following control problem:
		\begin{equation}\label{eq:reduced_nh_integrator}
			\dot{r} = u, \quad \dot{z} = rv,
		\end{equation}
		with cost function
		\begin{equation*}
			J = \int_0^\infty \, \frac{1}{2}(r^2 + z^2 + u^2 + v^2) \, dt.
		\end{equation*}
		It is interesting to note that \eqref{eq:reduced_nh_integrator} does satisfy Brockett's condition even though the stable manifold is singular at the origin, see Fig. \ref{fig:planar_caustics} below.
	\end{remark}
	
	The resulting Hamiltonian dynamics are
	\begin{equation}\label{eq:deg_hamiltonian}
		\begin{array}{lcl}
			\dot{r} = p_r, && \dot{p}_r = r(1-p_z^2), \\
			\dot{z} = r^2p_z, && \dot{p}_z = z.
		\end{array}
	\end{equation}
	As the origin is to be stabilized, the Lagrangian manifold of interest is actually
	\begin{equation*}
		\Lambda = W^s(N), \quad N = \left\{ (0,0,0,p_z) : |p_z|<1 \right\},
	\end{equation*}
	where $N$ is an isotropic manifold, recall Proposition \ref{prop:closed_Lagrangian}.
	For a single point on the $p_z$-axis, the stable eigenspace is
	\begin{equation*}
		E^s([0, 0, 0, p_z]) = \left[ -\sqrt{1-p_z^2}, 0, 1-p_z^2, 0 \right] \cdot \mathbb{R}, \quad |p_z|<1.
	\end{equation*}
	As the manifold $N\subset T^*_{(0,0)}\mathbb{R}^2 \subset T^*\mathbb{R}^2$ is isotropic, by Theorem \ref{thm:att_man_iso} its stable manifold is also isotropic and hence Lagrangian as it is two-dimensional.
	
	A point on a characteristic lies on the caustic if
	\begin{equation*}
		\det J = 0, \quad J = \begin{bmatrix}
			\dot{x} & \Phi_{14} \\
			\dot{y} & \Phi_{24}
		\end{bmatrix},
	\end{equation*}
	where $\Phi$ is the state-transition matrix arising from \eqref{eq:deg_hamiltonian}, i.e.
	\begin{equation*}
		\dot{\Phi} = A(t)\Phi, \quad A = \begin{bmatrix}
			0 & \hspace{0.25cm} 0 & \hspace{0.3cm} 1 & \hspace{0.1cm}0 \\
			2rp_z & \hspace{0.25cm} 0 & \hspace{0.3cm} 0 & \hspace{0.1cm}r^2 \\
			1-p_z^2 & \hspace{0.25cm} 0 & \hspace{0.3cm} 0 &\hspace{0.1cm}-2rp_z \\
			0 & \hspace{0.25cm} 1 & \hspace{0.3cm} 0 &\hspace{0.1cm} 0
		\end{bmatrix}.
	\end{equation*}
	It turns out that there exists an infinite family of cusps emanating from the origin and coalescing to the $z$-axis, see Fig. \ref{fig:planar_caustics}. The presence of these caustics inhibits the construction of a continuous feedback law for the reduced control system \eqref{eq:reduced_nh_integrator}. A discontinuous feedback law can be constructed by choosing which law is implemented within the first caustics, see Fig. \ref{fig:planar_controls}.
	
	\begin{figure}
		\centering
		\begin{subfigure}{0.475\textwidth}
			\centering
			\includegraphics[width=\linewidth]{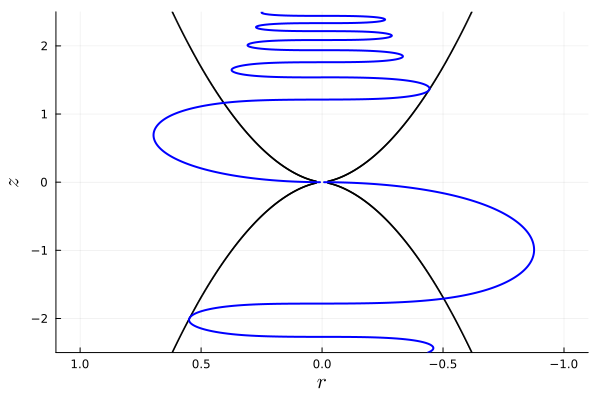}
			\caption{The initial caustic.}
		\end{subfigure}
		\begin{subfigure}{0.475\textwidth}
			\centering
			\includegraphics[width=\linewidth]{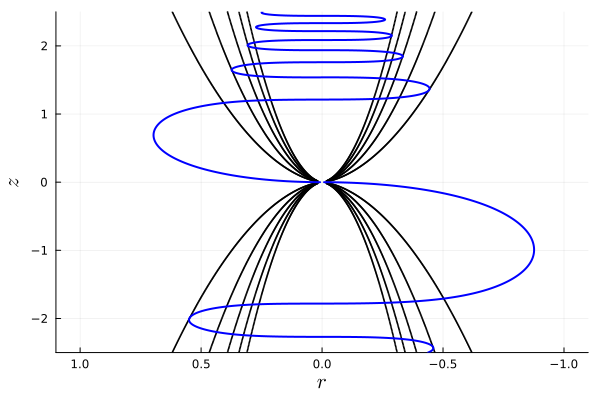}
			\caption{The first five caustics.}
		\end{subfigure}
		\caption{Caustics in the stable manifold for \eqref{eq:deg_hamiltonian}. There is an infinite family of caustics/cusps that accumulate along the $z$-axis that are generated by the ``wiggles'' in the trajectories.}
		\label{fig:planar_caustics}
	\end{figure}
	\begin{figure}
		\centering
		\includegraphics[width=0.8\textwidth]{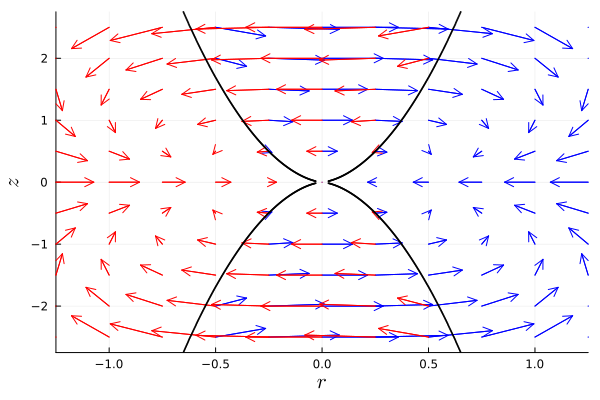}
		\caption{The stabilizing dynamics for \eqref{eq:deg_hamiltonian}. There are two different modes corresponding to the projection of the stable manifold being multi-valued. The boundary of these regions are the caustics.}
		\label{fig:planar_controls}
	\end{figure}
	
	The presence of these caustics, however, does not directly obstruct the existence of a feedback law to the original system as a continuous law can be found for the reduced system under the restriction that $r>0$. To apply this law to the original system, the reconstruction for the controls is
	\begin{equation*}
		\begin{split}
			u &= p_r\cos\theta + yp_z, \\
			v &= p_r\sin\theta - xp_z.
		\end{split}
	\end{equation*}
	As $p_r\ne 0$ on the $z$-axis, this feedback is not continuous. Therefore, as expected, this procedure fails to produce a continuous feedback for the nonholonomic integrator.
	\subsection{Stabilizing a periodic orbit}\label{sec:periodic_orbit}
	For the final example in this work, we will stabilize a periodic orbit as an application of Theorem \ref{thm:main_result} that is not covered by Theorem \ref{thm:generating_fixed}.
	Consider the following control system:
	\begin{equation}\label{eq:unstable_po}
		\begin{split}
			\dot{x} &= -2\pi y + x(x^2+y^2-1) + u, \\
			\dot{y} &= 2\pi x + y(x^2+y^2-1).
		\end{split}
	\end{equation}
	When $u=0$, the unit circle is an unstable limit cycle given by
	\begin{equation*}
		\Gamma : \gamma(t) = \left( \cos 2\pi t, \sin 2\pi t\right).
	\end{equation*}
	
	To stabilize this orbit, we choose the cost
	\begin{equation*}
		\ell = \frac{1}{2}u^2 + \frac{1}{2}(x^2+y^2-1)^2,
	\end{equation*}
	which generates the Hamiltonian
	\begin{equation*}
		\begin{split}
			H &= p_x\dot{x} + p_y\dot{y} - \frac{1}{2}u^2 - \frac{1}{2}(x^2+y^2-1)^2,
			\quad u = p_x.
		\end{split}
	\end{equation*}
	The resulting equations of motion are
	\begin{equation}\label{eq:hamilton_po}
		\begin{split}
			\dot{x} &= -2\pi y + x(x^2+y^2-1) + p_x, \\
			\dot{y} &= 2\pi x + y(x^2+y^2-1), \\
			\dot{p}_x &= 2x(x^2+y^2-1) - p_x(3x^2+y^2-1) - p_y(2\pi+2xy), \\
			\dot{p}_y &= 2y(x^2+y^2-1) - p_x(2xy-2\pi) - p_y(x^2+3y^2-1).
		\end{split}
	\end{equation}
	The isotropic manifold is
	\begin{equation}\label{eq:isotropic_po}
		Z_\Gamma = \left\{ \left( \cos 2\pi t, \sin 2\pi t, 0, 0\right) : 0\leq t < 1 \right\}.
	\end{equation}
	To determine the stable manifold, $W^s(Z_\Gamma)$, and to apply Theorem \ref{thm:main_result}, we linearize about the orbit. The Jacobian of \eqref{eq:hamilton_po} is
	\begin{equation*}
		A(t) = \begin{bmatrix}
			2\cos^2 2\pi t & \sin(4\pi t)-2\pi & 1 & 0 \\
			\sin(4\pi t)+2\pi & 2\sin^2 2\pi t & 0 & 0 \\
			4\cos^2 2\pi t & 2\sin(4\pi t) & -2\cos^2 2\pi t & -\sin(4\pi t)-2\pi\\
			2\sin(4\pi t) & 4\sin^2 2\pi t & 2\pi-\sin(4\pi t) & -2\sin^2 2\pi t
		\end{bmatrix}.
	\end{equation*}
	The variational equation $\dot{\Phi}=A(t)\Phi$ with $\Phi(0)=\mathrm{Id}$ results in the state-transition matrix:
	\begin{equation*}
		\Phi(1) \approx \begin{bmatrix}
			10.406 & 0.0 & 1.2936 & -0.3594\\
			0.3504 & 1.0 & 0.00902 & 0.4835 \\
			9.1707 & 0.0 & 1.2361 & -0.3504 \\
			0.0 & 0.0 & 0.0 & 1.0
		\end{bmatrix}.
	\end{equation*}
	The four eigenvalues of this matrix are
	\begin{equation*}
		\sigma\left( \Phi(1) \right) \approx \left\{
		0.0865, 1.0, 1.0, 11.556 \right\}.
	\end{equation*}
	Therefore, $\Phi(1)$ is a hyperbolic matrix and $Z_\Gamma$ is a symplectic saddle. The stable eigenvector is
	\begin{equation*}
		v^s \approx \begin{bmatrix}
			-0.1243 \\
			0.0379 \\
			0.9915 \\
			0.0
		\end{bmatrix},
	\end{equation*}
	which is not vertical which implies the existence of a feedback law. To visualize the stable manifold, we restrict to the Poincar\'{e} section $\{y=0\}$ and apply the energy constraint $H=0$. The stable manifold in the $(x,p_x)$-coordinates is shown in Fig. \ref{fig:orbit_stable_manifold}. Interestingly, the periodic orbit is homoclinic and it appears that the homoclinic loop encloses a family of quasi-periodic orbits, rendering the system integrable. The synthesized feedback law is shown in Fig. \ref{fig:stabilizing_po}.
	\begin{figure}
		\centering
		\begin{subfigure}{0.475\textwidth}
			\centering
			\includegraphics[width=\linewidth]{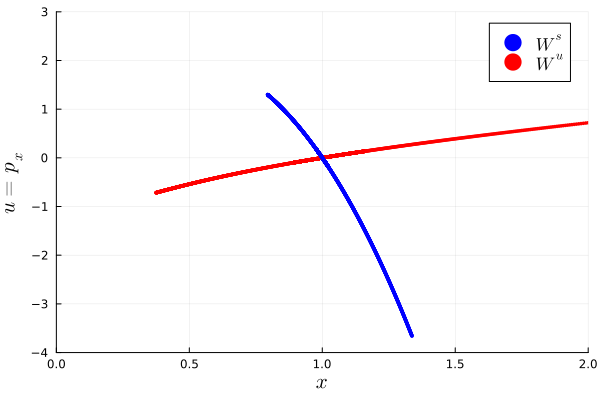}
			\caption{The local stable and unstable manifolds.}
		\end{subfigure}
		\begin{subfigure}{0.475\textwidth}
			\centering
			\includegraphics[width=\linewidth]{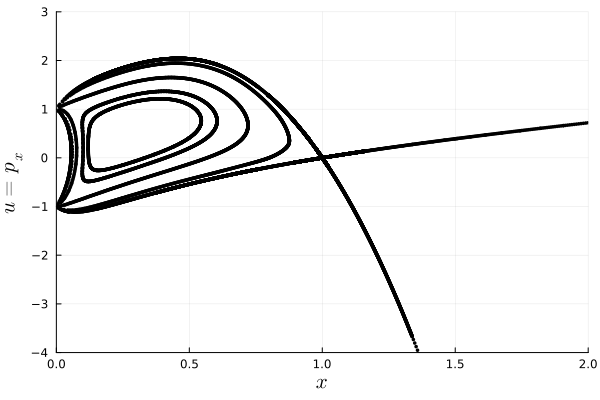}
			\caption{The homoclinic loop.}
		\end{subfigure}
		\caption{The stable and unstable manifolds of the isotropic manifold \eqref{eq:isotropic_po} under the dynamics \eqref{eq:hamilton_po} restricted to Poincar\'{e} section and an energy surface. As the periodic orbit at $x=1$ is not in the caustic for $W^s(Z_\Gamma)$, a feedback law can be synthesized.}
		\label{fig:orbit_stable_manifold}
	\end{figure}
	\begin{figure}
		\centering
		\includegraphics[width=0.8\textwidth]{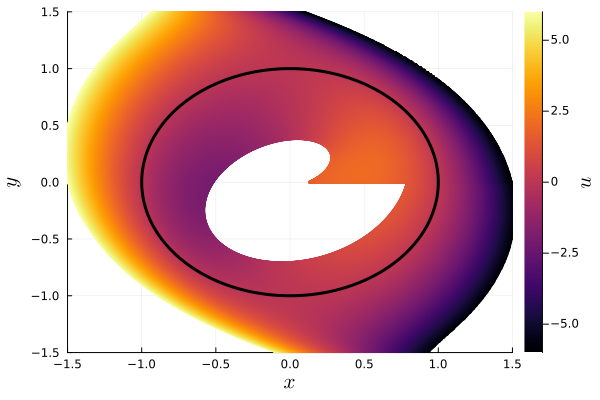}
		\caption{The stabilizing feedback law computed from $W^s(Z_\Gamma)$ for \eqref{eq:unstable_po}.}
		\label{fig:stabilizing_po}
	\end{figure}
	
	\section{Conclusion}\label{sec:conclusion}
	Feedback stabilization can be viewed through the context of Hamiltonian systems and dynamical systems theory. By using Pontryagin's maximum principle, an infinite-horizon control problem is transformed into finding stable (Lagrangian) submanifolds. Obstructions to stabilization translate to geometric features of these submanifolds, e.g., failing to project diffeomorphically to the base and sections being multi-valued. 
	
	We point out two avenues for further study:
	\begin{enumerate}
		\item Target sets in control theory correspond to isotropic submanifolds of a Hamiltonian system. This creates two purely dynamical questions (recall Problem \ref{quest:LAP}):
		\begin{center}
			Let $(M,\omega, H)$ be a Hamiltonian system and $N\subset M$ an isotropic submanifold. Is $(N,X_H)$ a symplectic saddle? 
		\end{center}
		The answer to this question requires more delicate analysis than the standard stable manifold theorem as $N\subset M$ cannot be hyperbolic, Theorem \ref{thm:isotropic_nhim}. A follow up question is the following:	
		\begin{center}
			Suppose that $(N,X_H)$ is a symplectic saddle and $M=T^*Q$ is a cotangent bundle. Does $N$ belong to the stable caustic set? i.e. are the following sets disjoint
			\begin{equation*}
				\pi_Q(N) \cap \Sigma \left(W^s\left( N \right)\right) = \emptyset.
			\end{equation*}
		\end{center}
		Resolving this second question provides sufficient conditions for the existence of feedback laws.
		\item This work exclusively studied time-invariant feedback laws through infinite-horizon optimal control. A natural extension is for time-varying feedback. This results in solving the boundary problem \eqref{eq:mom_boundary} from Theorem \ref{thm:pmp}. Three notable final conditions are
		\begin{description}
			\item[Fixed point:] If it is required that $x(t_f) = x_f$, the final momentum is
			\begin{equation*}
				p(t_f) \in T_{x_f}^*Q \subset T^*Q.
			\end{equation*}
			\item[Constraint set:] If it is required that $x(t_f)\in N$ where $N\subset Q$ is a submanifold, then
			\begin{equation*}
				p(t_f) \in \Lambda_N := \mathrm{Ann}(TN)\subset T^*Q.
			\end{equation*}
			\item[Terminal cost:] If there is a cost penalty of $g(x(t_f))$, then
			\begin{equation*}
				p(t_f) \in \Gamma_{dg} = \left\{ dg_x : x\in Q \right\} \subset T^*Q.
			\end{equation*}
		\end{description}
		All three of these sets, $T_{x_f}^*Q$, $\Lambda_N$, and $\Gamma_{dg}$, are Lagrangian submanifolds of $T^*Q$ - but are no longer invariant under the Hamiltonian flow. Additionally, for the initial conditions, only $x(t_0)$ is specified while $p(t_0)$ is free. This means that the initial conditions lie within the Lagrangian submanifold $T_{x_0}^*Q$. If $\varphi_t$ is the Hamiltonian flow, then solutions to the boundary value problem are characterized by the intersection
		\begin{equation*}
			T_{x_0}^*Q \cap \varphi_{-t}\left( \Lambda \right),
		\end{equation*}
		where $\Lambda$ is the terminal Lagrangian submanifold. Therefore, the construction of time-varying feedback laws is related to the intersection of Lagrangian manifolds under Hamiltonian isotopies. Estimates on Lagrangian intersections is a central aspect of symplectic topology, e.g., \cite[Chapter~11]{mcduff} for a pleasant overview. Unfortunately, much of the theory developed requires that the symplectic manifold be compact while cotangent bundles are never compact (cf. the Arnold-Givental conjecture).
	\end{enumerate}
	
	\backmatter 
	\bmhead{Acknowledgements}
	This work was funded by AFOSR grant FA9550-23-1-0400.

	\bibliography{sn-bibliography}%
	
\end{document}